\newtheorem{remark}{Remark}[section]
\def\calA{{\mathcal{A}}}
\def\calI{{\mathcal{I}}}
\title{ Robust Decoding from  1-Bit Compressive Sampling with Least Squares }
\author{Jian Huang\thanks{Department of Applied Mathematics, The Hong Kong Polytechnic University,  Hong Kong 999077, P.R. China.(j.huang@polyu.edu.hk)}\quad\and
Yuling Jiao\thanks{School of Statistics and Mathematics, Zhongnan University of Economics and Law, Wuhan 430063, P.R. China. (yulingjiaomath@whu.edu.cn)}\quad\and
 Xiliang Lu\thanks{School of Mathematics and Statistics, and
Hubei Key Laboratory of Computational Science, Wuhan University, Wuhan 430072, P.R. China. (xllv.math@whu.edu.cn)}\quad\and
 Liping Zhu\thanks{Institute of Statistics and Big Data and Center for Applied Statistics, Renmin University of China, 	Beijing 100872, P. R. China. (zhu.liping@ruc.edu.cn)}
}
\begin{document}
\maketitle

\begin{abstract}
In  1-bit compressive sensing (1-bit CS) where  target signal  is coded into  a  binary  measurement, one goal is to   recover the signal   from noisy and   quantized   samples. Mathematically, the 1-bit CS model reads: $y = \eta \odot\textrm{sign} (\Psi x^* + \epsilon)$, where $x^{*}\in \mathcal{R}^{n},  y\in \mathcal{R}^{m}$,  $\Psi \in \mathcal{R}^{m\times n}$, and $\epsilon$ is the random  error before quantization and  $\eta\in \mathcal{R}^{n}$ is a random vector modeling the sign flips. Due to the presence of  nonlinearity, noise  and sign flips, it is quite challenging to decode from the 1-bit CS.  In this paper,
we consider least squares approach under  the over-determined and  under-determined settings.  For  $m>n$, we show that, up to a constant $c$, with high probability, the least squares solution $x_{\textrm{ls}}$ approximates  $ x^*$ with precision $\delta$ as long  as $m \geq\widetilde{\mathcal{O}}(\frac{n}{\delta^2})$.
For $m< n$,
 we prove that, up to a constant $c$, with high probability, the $\ell_1$-regularized least-squares solution  $x_{\ell_1}$ lies in the ball with center   $x^*$  and  radius $\delta$  provided that $m \geq \mathcal{O}( \frac{s\log n}{\delta^2})$ and  $\|x^*\|_0 := s < m$.
We introduce  a  Newton type method, the so-called  primal and dual active set (PDAS) algorithm,  to solve  the nonsmooth optimization problem. The PDAS possesses the property of   one-step convergence. It only requires to solve
 a small least
squares problem on the active set. Therefore,   the PDAS is   extremely efficient for recovering  sparse signals
  through continuation. We propose a novel regularization parameter selection  rule which  does not introduce  any extra computational overhead.
Extensive numerical experiments are presented to illustrate the robustness  of our proposed
model and the efficiency  of our  algorithm. \\
\noindent\textbf{Keywords:} 1-bit compressive sensing, $\ell_1$-regularized least squares,    primal dual active set  algorithm,  one step convergence, continuation
\end{abstract}

\section{Introduction}\label{sec:intro}
Compressive sensing (CS)
is an important approach to  acquiring low dimension
signals from noisy under-determined measurements \cite{CandesRombergTao:2006,Donoho:2006,FazelCandesRcht:2008,FoucartRauhut:2013}.
For storage and transmission,
the infinite-precision measurements are often quantized, \cite{BoufounosBaraniuk:2008} considered recovering the signals  from the 1-bit compressive sensing  (1-bit CS)  where  measurements
are  coded into a single bit, i.e., their signs. The 1-bit CS is superior to the  CS
in terms of inexpensive hardware implementation and storage.
However, it is much more challenging to decode from nonlinear, noisy and sign-flipped  1-bit measurements.
 \subsection{Previous work}
 Since the seminal  work of  \cite{BoufounosBaraniuk:2008},
much effort has been devoted to studying the theoretical and computational challenges  of the 1-bit CS.
Sample complexity was analyzed for    support and  vector recovery  with and without noise \cite{GopiJain:2013,JacquesDegraux:2013,PlanVershynincpam:2013,BauptBaraniuk:2011,JacquesLaska:2013, GuptaNowakRech:2010,BauptBaraniuk:2011,PlanVershynin:2013,ZhangYiJin:2014}.
Existing works indicate  that,  $m > \mathcal{O}(s\log n)$ is adequate for both  support and  vector recovery. The sample size required here
has the same order as that required
in the standard CS setting.
These results have also been refined by   adaptive sampling  \cite{GuptaNowakRech:2010,DaiShenXuZhang:2016,BaraniukFoucartNeedellPlan:2017}. Extensions include   recovering the norm of the target \cite{KnudsonSaabWard:2016,BaraniukFoucartNeedellPlan:2016} and   non-Gaussian measurement settings \cite{AiPlanVershynin:2014}.
 Many first order methods   \cite{BoufounosBaraniuk:2008, LaskaWenYinBaraniuk:2011, YanYangOsher:2012,DaiShenXuZhang:2016} and greedy methods \cite{LiuGongXu:2016,Boufounos:2009,JacquesLaska:2013} are developed  to minimize the sparsity promoting nonconvex objected function arising  from either  the unit sphere constraint or the  nonconvex regularizers.
To address the nonconvex optimization  problem,  convex  relaxation models are also proposed \cite{ZhangYiJin:2014,PlanVershynin:2013,PlanVershynincpam:2013,
ZymnisBoydCandes:2010,PlanVershynin:2017}, which often  yield accurate solutions efficiently with polynomial-time solvers. See, for example, \cite{Nesterov:2013}.

\subsection{1-bit CS setting}\label{setting}
 In this paper we consider the following  1-bit CS model
 \begin{equation}\label{setup}
 y = \eta \odot\textrm{sign} (\Psi x^* + \epsilon),
  \end{equation}
where $y\in \mathcal{R}^{m}$ is the 1-bit measurement, $x^{*}\in \mathcal{R}^{n}$ is an unknown signal, $\Psi = [\psi_1^t;...;\psi_m^t] \in \mathcal{R}^{m\times n}$ is a random  matrix, $\eta\in \mathcal{R}^{m}$ is a random vector modeling the sign flips of $y$, and $\epsilon \in \mathcal{R}^{n}$ is a random vector with  independent and identically distributed (iid) entries modeling errors before quantization. Throughout $\textrm{sign}(\cdot)$ operates componentwise with $\textrm{sign}(z) =1$ if $z \geq 0$ and $\textrm{sign}(z) =-1$ otherwise,  and $\odot $ is the pointwise Hardmard product. Following \cite{PlanVershynincpam:2013} we assume that the rows of $\Psi$ are  iid random vectors sampled from the multivariate normal distribution $\mathcal{N}(\textbf{0},\Sigma)$ with  an unknown
covariance matrix $\Sigma $, $\epsilon$ is
distributed as $\mathcal{N}(\textbf{0},\sigma^2\textbf{I}_m)$ with an
unknown noise level $\sigma$, and
$\eta\in \mathcal{R}^{m}$ has independent coordinates $\eta_is$ satisfying
 $\mathbb{P}[\eta_i = 1] = 1- \mathbb{P}[\eta_i = -1] = q \neq \frac{1}{2}$.  We assume  $\eta_i, \epsilon_i$ and $\psi_i$ are mutually independent.
Because $\sigma$ is known model \eqref{setup} is  invariant in the sense that  $\forall \alpha > 0$, $y = \eta \odot\textrm{sign} (\Psi x^* + \epsilon) =  \eta \odot\textrm{sign} (\alpha\Psi x^* + \alpha \epsilon)$. This indicates that the best one can hope for  is to recover $x^*$ up to a scale factor.  Without loss of generality
we assume $\| x^*\|_{\Sigma} = 1$.

\subsection{Contributions}

We study the 1-bit CS problem in both the overdetermined setting with $m > n$  and  the underdetermined setting with $m<n$. In the former setting we allow for  dense $x^*$, while in the latter, we assume that $x^*$ is sparse in the sense that  $\|x^*\|_0 = s< m.$
The basic message is that we can recover $x^*$
 with the ordinary  least squares or the $\ell_1$ regularized  least squares.

(1) When $m>n$, we propose to use  the least squares solution
\begin{equation*}
 x_{\textrm{ls}} \in \arg \min\frac{1}{m}\sum_{i=1}^{m} (y_i - \psi_i^t x)^2
\end{equation*}
  to approximate $x^*$. We show that, with high probability,
$x_{\textrm{ls}}$ estimates  $x^*$ accurately up to a positive scale factor $c$ defined by \eqref{constc} in the sense that,
$\forall \delta \in (0,1)$, $\|x_{\textrm{ls}}/c- x^*\|\leq \delta$ if  $m\geq \widetilde{\mathcal{O}}(\frac{n}{\delta^2})$.
We make the following observation:
\begin{adjustwidth}{1cm}{1cm}
   \textit{Up to a constant $c$, the underlying target $x^*$ can be decoded  from 1-bit measurements  with the ordinary  least squares,
   as long as the probability of sign flips  probability is not equal to $1/2$.
   }
 \end{adjustwidth}
 (2) When $m<n$ and the target signal $x^*$ is sparse,
we consider the $\ell_1$-regularized least squares solution
 \begin{equation}\label{subreg}
 x_{\ell_1} \in \arg \min  \frac{1}{2m}\|y - \Psi x\|_2^2 + \lambda \|x\|_1.
  \end{equation}
The sparsity assumption  is widely used   in modern signal processing \cite{FoucartRauhut:2013,Mallat:2008}.
  We show that, with high probability  the error  $\|x_{\ell_1}/c - x^*\|$ can be bounded by a prefixed accuracy
   $\delta \in(0,1)$   if  $m\geq \mathcal{O}(\frac{s\log n}{\delta^2})$, which is the same as the  order
for  the standard CS methods to work.
   Furthermore,  the support of $x^*$ can be exactly recovered if the minimum signal magnitude of $x^*$ is larger than  $\mathcal{O}(\sqrt{s\log n/m}).$
   When the target signal is sparse,  we  obtain the following   conclusion:
\begin{adjustwidth}{1cm}{1cm}
   \textit{Up to a constant $c$, the  sparse signal  $x^*$ can also be decoded from 1-bit measurements with the  $\ell_1$-regularized least squares, as long as the probability of sign flips  probability is not equal to $1/2$.
   }
 \end{adjustwidth}
 (3) We introduce   a fast and accurate   Newton method, the so-called   primal dual active set method (PDAS), to solve the $\ell_1$-regularized minimization \eqref{subreg}.  The  PDAS  possesses the property of    one-step convergence. The PDAS solves a small least
squares problem on the active set,  is thus extremely efficient if  combined with continuation.
 We propose a novel regularization parameter selection rule,   which is  incorporated with continuation  procedure without additional cost.
{The code is available  at \url{http://faculty.zuel.edu.cn/tjyjxxy/jyl/list.htm}.}

 \begin{adjustwidth}{1cm}{1cm}
   \textit{The optimal solution $x_{\ell_1}$ can be computed    efficiently and  accurately  with  the PDAS, a   Newton type  method which converges after one iteration,  even if the objective function  \eqref{subreg} is nonsmooth. Continuation on $\lambda$ globalizes the  PDAS. The regularization parameter can be  automatically determined  without additional computational cost.}
 \end{adjustwidth}

\subsection{Notation and organization}
Throughout we denote by $\Psi_i\in \mathcal{R}^{m\times1}, i = 1,..., m,$ and $\psi_j \in \mathcal{R}^{n\times 1},j=1,...,n$ the $i$th column and $j$th row  of $\Psi$, respectively. We denote a vector of  $0$  by $\textbf{0}$, whose length may vary in different places. We use   $[n]$  to denote the set $\{1,...,n\}$, and  $\textbf{I}_n$ to denote the identity matrix of size $n\times n$.
 For  $ A,B\subseteq [n]$ with length $|A|,|B|$, $x_{A}=(x_{i}, i\in A)\in \mathbb{R}^{|A|}$,  $\Psi_{A}=(\Psi_{i}, i\in A)\in \mathbb{R}^{m\times|A|}$ and $\Psi_{AB}\in \mathbb{R}^{|A|\times
|B|}$  denotes a submatrix of $\Psi$ whose rows and columns
are listed in $A$ and $B$, respectively.
We  use $(\psi_i)_j $ to denote
the $j$th entry of the  vector $\psi_i$, and  $|x|_{\textrm{min}}$ to denote the minimum absolute value of $x$.
We use $\mathcal{N}(\textbf{0},\Sigma)$ to denote the multivariate normal distribution, with $\Sigma$  symmetric and positive definite.  Let $\gamma_{\textrm{max}}(\Sigma)$ and  $\gamma_{\textrm{min}}(\Sigma)$  be the largest  and the smallest eigenvalues of $\Sigma$, respectively,  and $\kappa(\Sigma)$ be the condition number $\gamma_{\textrm{max}}(\Sigma)/\gamma_{\textrm{min}}(\Sigma)$ of $\Sigma$. We use $\|x\|_{\Sigma}$ to denote the elliptic norm of $x$ with respect to $\Sigma$, i.e., $\|x\|_{\Sigma} = (x^{t}\Sigma x)^{\frac{1}{2}}.$ Let $\|x\|_p = (\sum_{i=1}^{n}|x_{i}|^p)^{1/p}, p\in [1,\infty],$  be
the $\ell_p$-norm of  $x$.
We denote the  number of nonzero elements of $x$ by $\|x\|_{0}$ and let $s = \|x^*\|_{0}$. The symbols $\|\Psi\|$ and $\|\Psi\|_{\infty}$ stands for the operator norm of $\Psi$ induced by $\ell_2$ norm and the maximum  pointwise absolute value of $\Psi$, respectively. We use $\mathbb{E}[\cdot]$, $\mathbb{E}[\cdot|\cdot]$, $\mathbb{P}[\cdot]$ to denote the expectation, the conditional expectation  and the probability on a given probability space $(\Omega, \mathfrak{F},\mathbb{P})$. We use $C_1$ and $C_2$ to denote generic  constants which may  vary from place to place.
By  $\mathcal{O}(\cdot)$ and $\widetilde{\mathcal{O}}(\cdot)$, we  ignore some positive numerical  constant and $\sqrt{\log n}$, respectively.

The rest of the paper is organized as follows. In Section 2  we explain
why the  least squares works in the 1-bit CS  when $m>n$, and obtain   a nonasymptotic error bound for $\| x_{\textrm{ls}}/c-x^* \|.$
In Section 3  we  consider the sparse decoding
when $m<n$ and prove a minimax bound on   $\|x_{\ell_1}/c -x^* \|$.
 In Section 4 we introduce the  PDAS algorithm to solve  \eqref{subreg}. We propose a new regularization parameter selection rule combined with the  continuation  procedure.
In Section 5 we conduct simulation studies   and compare  with  existing 1-bit CS  methods.
We conclude with some  remarks in Section 6.

\section{Least squares when $m>n$}
In this section, we first explain
why the least squares  works  in the over-determined  1-bit CS model \eqref{setup} with  $m>n$.
We then prove  a nonasymptotic error bound on $\|x_{\textrm{ls}}/c - x^* \|.$
The following lemma inspired by
 \cite{Brillinger:2012} is our starting point.
\begin{lemma}
Let $\tilde{y} = \tilde{\eta}  \textrm{sign}(\tilde{\psi}^t x^* + \tilde{\epsilon})$ be  the  1-bit model \eqref{setup} at the population level.  $\mathbb{P}[\tilde{\eta} = 1] = q \neq \frac{1}{2}$, $\tilde{\psi}\sim \mathcal{N}(\textbf{0},\Sigma)$, $\tilde{\epsilon}\sim \mathcal{N}(0,\sigma^2).$
It follows that,
\begin{equation}\label{parallel}
\Sigma^{-1} \mathbb{E}[\tilde{y} \tilde{\psi}]/c = x^*,
\end{equation}
\end{lemma}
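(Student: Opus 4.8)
The plan is to reduce the vector identity \eqref{parallel} to a one-dimensional Gaussian moment computation by exploiting two structural facts: the independence of the sign-flip variable $\tilde\eta$ from $(\tilde\psi,\tilde\epsilon)$, and the joint Gaussianity of $\tilde\psi$ together with the pre-quantization variable $u := \tilde\psi^t x^* + \tilde\epsilon$. First I would peel off $\tilde\eta$: since $\tilde\eta$ is independent of the rest and $\mathbb{E}[\tilde\eta] = q - (1-q) = 2q-1$, we get $\mathbb{E}[\tilde y\,\tilde\psi] = (2q-1)\,\mathbb{E}[\mathrm{sign}(u)\,\tilde\psi]$. This already isolates where the hypothesis $q\neq\tfrac12$ enters: it is exactly what guarantees the eventual scale constant is nonzero.

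Second, I would use the fact that $(\tilde\psi, u)$ is jointly Gaussian, with $\mathrm{Cov}(\tilde\psi, u) = \Sigma x^*$ and $\mathrm{Var}(u) = \|x^*\|_{\Sigma}^2 + \sigma^2 = 1 + \sigma^2$, the last equality using the normalization $\|x^*\|_{\Sigma} = 1$. Regressing $\tilde\psi$ on $u$ yields the orthogonal decomposition $\tilde\psi = \frac{\Sigma x^*}{1+\sigma^2}\, u + r$, where the residual $r$ is a centered Gaussian vector that is independent of $u$. Substituting this into $\mathbb{E}[\mathrm{sign}(u)\,\tilde\psi]$ splits it into two pieces: the term carrying $r$ vanishes because $r\perp u$ and $\mathbb{E}[r]=0$, leaving only $\frac{\Sigma x^*}{1+\sigma^2}\,\mathbb{E}[u\,\mathrm{sign}(u)]$.

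Third, the scalar $\mathbb{E}[u\,\mathrm{sign}(u)] = \mathbb{E}|u|$ is a standard half-normal moment: for $u\sim\mathcal{N}(0,1+\sigma^2)$ it equals $\sqrt{2(1+\sigma^2)/\pi}$. Assembling the pieces gives $\mathbb{E}[\tilde y\,\tilde\psi] = (2q-1)\sqrt{2/(\pi(1+\sigma^2))}\,\Sigma x^*$, so that with the constant $c := (2q-1)\sqrt{\tfrac{2}{\pi(1+\sigma^2)}}$, multiplying by $\Sigma^{-1}$ and dividing by $c$ recovers exactly $x^*$, which is \eqref{parallel}.

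The one delicate point, and the step I expect to be the crux, is justifying the regression decomposition in a way that legitimately kills the $r$-term despite the nondifferentiability of $\mathrm{sign}$. The classical Stein--Brillinger identity $\mathbb{E}[g(\tilde\psi)\,\tilde\psi] = \Sigma\,\mathbb{E}[\nabla g(\tilde\psi)]$ does not apply verbatim, since $g = \mathrm{sign}(\langle\cdot,x^*\rangle+\tilde\epsilon)$ is not $C^1$. The Gaussian-regression route sidesteps differentiation entirely: independence of $r$ and $u$ is an exact property of jointly Gaussian vectors, so $\mathbb{E}[\mathrm{sign}(u)\,r] = \mathbb{E}[\mathrm{sign}(u)]\,\mathbb{E}[r] = 0$ holds rigorously, provided only that $\mathbb{E}\,|\mathrm{sign}(u)\,r| < \infty$, which is immediate from $\mathbb{E}\|r\| < \infty$. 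Thus the only real care is in setting up the decomposition and checking integrability, after which the problem collapses to the univariate Gaussian moment above.
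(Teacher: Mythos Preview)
Your proof is correct and follows the same overall strategy as the paper: factor out $\tilde\eta$ by independence, then use Gaussian regression/conditional expectation to reduce $\mathbb{E}[\mathrm{sign}(\cdot)\,\tilde\psi]$ to a one-dimensional Gaussian moment. The execution differs in one useful way. The paper conditions on the \emph{noise-free} linear combination $\tilde\psi^t x^*\sim\mathcal{N}(0,1)$, obtains $\mathbb{E}[\tilde\psi\mid \tilde\psi^t x^*]=\Sigma x^*\,(\tilde\psi^t x^*)$, and is then left with the scalar $\mathbb{E}[\mathrm{sign}(\tilde\psi^t x^*+\tilde\epsilon)\,\tilde\psi^t x^*]$, which it evaluates via a Stein-type integration-by-parts step (writing $\mathbb{E}[g(u)u]=\mathbb{E}[g'(u)]$ with $g(u)=1-2\mathbb{P}[\tilde\epsilon\le -u]$). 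You instead fold $\tilde\epsilon$ into the regression variable from the start, setting $u=\tilde\psi^t x^*+\tilde\epsilon$; this makes the residual $r$ independent of the full argument of $\mathrm{sign}$, so the cross term vanishes without any further conditioning, and the remaining scalar is simply $\mathbb{E}|u|$. Your route is slightly more direct and avoids the distributional-derivative issue you flagged; the paper's route keeps the roles of measurement noise and quantization separated, which makes the dependence on $\sigma$ a bit more transparent. Both arrive at the same constant $c=(2q-1)\sqrt{2/(\pi(1+\sigma^2))}$.
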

where
\begin{equation}\label{constc}
  c = (2q-1) \sqrt{\frac{2}{\pi (\sigma^2 + 1)}}.
  \end{equation}
\begin{proof}
The proof  is given in Appendix \ref{app:parallel}.
\end{proof}

Lemma \ref{parallel}  shows that,  the target $x^*$  is proportional to  $\Sigma^{-1} \mathbb{E}[\tilde{y} \tilde{\psi}]$.
Note that
\begin{align}
\mathbb{E}[\Psi^t\Psi/m]&= \mathbb{E}[\sum_{i=1}^m \psi_i \psi_i^t]/m   = \Sigma,\label{ub1} \quad and \quad\\
 \mathbb{E}[\Psi^t y/m ]&=  \mathbb{E}[\sum_{i = 1}^{m} \psi_i y_i ]/m = \mathbb{E}[\tilde{y} \tilde{\psi}]\label{ub2}.
  \end{align}
  As long as $\Psi^t \Psi/m$ is invertible,  it is reasonable to expect    that  $$x_{\textrm{ls}} = (\Psi^t \Psi/m)^{-1} (\Psi^t y/m) = (\Psi^t \Psi)^{-1} (\Psi^t y)$$
can approximate  $x^*$ well up to a constant $c$ even if $y$ consists of 
sign flips.

\begin{theorem}\label{errls}
Consider the ordinary least squares:
\begin{equation}\label{ls}
x_{\textrm{ls}}\in \arg\min_{x} \frac{1}{m}\|\Psi x - y\|_2^2.
\end{equation}
 If $m \geq 16C^2_2 n$, then
with probability at least $1- 4\exp{(-C_1 C_2^2 n)} -\frac{2}{n^3}$,
\begin{equation}\label{lsnorm}
\|x_{\textrm{ls}}/c- x^*\|_2 \leq   \sqrt{\frac{n}{m}}(4C_2 \sqrt{\kappa(\Sigma)\gamma_{\max}(\Sigma)}  + \frac{6(\sigma+1)}{\sqrt{C_1}|2q-1|}\sqrt{\log n}),
\end{equation}
where  $C_1$ and $C_2$ are some generic  constants not depending on $m$ or $n$.
\end{theorem}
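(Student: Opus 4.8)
The plan is to treat $x_{\textrm{ls}}$ as a perturbation of the population quantity identified in Lemma~\ref{parallel}. Write $\hat\Sigma = \Psi^t\Psi/m$ and $\hat b = \Psi^t y/m$, so that $x_{\textrm{ls}} = \hat\Sigma^{-1}\hat b$, and set $b = \mathbb{E}[\tilde y\tilde\psi]$. By \eqref{parallel} the population normal equations read $\Sigma^{-1}b = cx^*$, i.e.\ $b = c\Sigma x^*$, while by \eqref{ub1}--\eqref{ub2} the pair $(\hat\Sigma,\hat b)$ is the empirical analogue of $(\Sigma,b)$. Inserting $x^*=\hat\Sigma^{-1}\hat\Sigma x^*$ and $b=c\Sigma x^*$ gives the exact identity
\begin{equation*}
x_{\textrm{ls}}/c - x^* = \hat\Sigma^{-1}\Big[\tfrac{1}{c}(\hat b - b) + (\Sigma - \hat\Sigma)x^*\Big].
\end{equation*}
Taking norms splits the error into a covariance part $\|\hat\Sigma^{-1}\|\,\|(\Sigma-\hat\Sigma)x^*\|$ and a cross-moment part $\tfrac{1}{|c|}\|\hat\Sigma^{-1}\|\,\|\hat b - b\|$. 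The whole argument then reduces to controlling the three factors $\|\hat\Sigma^{-1}\|$, $\|\Sigma-\hat\Sigma\|$ and $\|\hat b-b\|$ on a high-probability event.

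For the covariance factors I would whiten: writing $\psi_i=\Sigma^{1/2}g_i$ with $g_i\sim\mathcal N(\textbf{0},\textbf{I}_n)$ iid gives $\hat\Sigma=\Sigma^{1/2}\hat G\,\Sigma^{1/2}$ with $\hat G=\tfrac1m\sum_i g_ig_i^t$. A standard nonasymptotic bound on the extreme singular values of Gaussian matrices (Davidson--Szarek / Vershynin), applied to its upper and lower tails, yields on an event $E_1$ of probability at least $1-4\exp(-C_1C_2^2 n)$ the estimate $\|\hat G-\textbf{I}_n\|\le 2C_2\sqrt{n/m}$. The hypothesis $m\ge 16C_2^2 n$ forces the right-hand side below $1/2$, so $\hat G$, hence $\hat\Sigma$, is invertible with $\gamma_{\min}(\hat\Sigma)\ge\tfrac12\gamma_{\min}(\Sigma)$, giving $\|\hat\Sigma^{-1}\|\le 2/\gamma_{\min}(\Sigma)$. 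Since $\|\Sigma-\hat\Sigma\|\le\gamma_{\max}(\Sigma)\,\|\hat G-\textbf{I}_n\|$ and $\|x^*\|_2\le 1/\sqrt{\gamma_{\min}(\Sigma)}$ from $\|x^*\|_{\Sigma}=1$, the covariance part is at most a constant multiple of $C_2\sqrt{n/m}\,\sqrt{\kappa(\Sigma)\gamma_{\max}(\Sigma)}$, which is the first term of \eqref{lsnorm}; the precise bookkeeping of the powers of $\gamma_{\min}$ and $\gamma_{\max}$ is where care is needed.

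For the cross-moment factor I would argue coordinate-wise. For each $j$, $(\hat b-b)_j=\tfrac1m\sum_i\big((\psi_i)_jy_i-\mathbb{E}[(\psi_i)_jy_i]\big)$ is an average of centered iid terms. The key observation is that, although $y_i$ depends on $\psi_i$ through the sign and the flip $\eta_i$, one has $|(\psi_i)_jy_i|=|(\psi_i)_j|$, so $(\psi_i)_jy_i$ is sub-Gaussian with $\psi_2$-norm of order $\sqrt{\gamma_{\max}(\Sigma)}$ regardless of that dependence. A sub-Gaussian (Hoeffding) tail bound with $t\asymp\sqrt{\log n/m}$ gives $\mathbb{P}[|(\hat b-b)_j|\ge t]\le 2n^{-4}$, and a union bound over $j\in[n]$ puts us on an event $E_2$ of probability at least $1-2/n^3$ on which $\|\hat b-b\|_2\le\sqrt n\,\max_j|(\hat b-b)_j|\le C\sqrt{n\log n/m}$. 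Multiplying by $\|\hat\Sigma^{-1}\|$ and $1/|c|$, and using $1/|c|=\tfrac{1}{|2q-1|}\sqrt{\pi(\sigma^2+1)/2}$ from \eqref{constc} together with $\sqrt{\sigma^2+1}\le\sigma+1$, reproduces the second term of \eqref{lsnorm}.

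Finally I would intersect the two events: on $E_1\cap E_2$, whose probability is at least $1-4\exp(-C_1C_2^2 n)-2/n^3$, adding the two bounds yields \eqref{lsnorm}. The conceptual content—that least squares is consistent for $cx^*$ despite the nonlinearity, the pre-quantization noise and the sign flips—is already supplied by Lemma~\ref{parallel}; the remaining work is the perturbation/concentration analysis above. I expect the main obstacle to be twofold: establishing the well-conditioning of $\hat\Sigma$ in the regime $m\gtrsim n$, which is the source both of the sample-size condition $m\ge 16C_2^2 n$ and of the $\exp(-C_1C_2^2 n)$ failure probability, and handling the dependence between $y_i$ and $\psi_i$ in the cross-moment term—resolved by the identity $|y_i(\psi_i)_j|=|(\psi_i)_j|$, which reduces it to a sub-Gaussian average.
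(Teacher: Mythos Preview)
Your proposal is correct and follows essentially the same route as the paper: the same perturbation identity (the paper writes it via the ``effective noise'' $\Delta=y-\Psi(cx^*)$ and bounds $\|(\Psi^t\Psi/m)^{-1}\|\cdot\|\Psi^t\Delta/m\|_2$, which is your display after multiplying through by $|c|$), the same whitening argument for $\hat\Sigma$, and the same coordinate-wise bound plus union bound for $\hat b-b$.

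The one genuine difference is your treatment of the cross-moment term. The paper invokes the generic fact that a product of two sub-Gaussians is sub-exponential and then applies a Bernstein-type inequality (its Lemma~\ref{basic} and Lemma~\ref{vershynin5.17}). Your observation that $|y_i(\psi_i)_j|=|(\psi_i)_j|$ shows directly that $y_i(\psi_i)_j$ is sub-Gaussian with the same $\psi_2$-norm as $(\psi_i)_j$, so Hoeffding suffices. This is a clean, more elementary shortcut that exploits the specific $1$-bit structure $|y_i|=1$; it yields the identical $\sqrt{\log n/m}$ rate and the same $2/n^3$ failure probability, while avoiding the sub-exponential machinery (and the extra sample-size restriction $m>\tfrac{4C_1}{C_2^2}\log n$ that the paper silently uses to land in the sub-Gaussian regime of Bernstein). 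Otherwise the arguments coincide.
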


 \begin{proof}
The proof  is given in Appendix \ref{app:errls}.
\end{proof}

\begin{remark}
Theorem \eqref{errls} shows that, $\forall \delta \in (0,1)$ if  $m \geq \widetilde{\mathcal{O}}(\frac{n}{\delta^2})$,  up to a constant, the simple least squares solution can  approximate  $x^*$  with  error of order $\delta$  even if  $y$ contains very large quantization error  and sign flips with probability unequal to  $1/2$.
\end{remark}

\begin{remark}
To the best of our knowledge, this is the first nonasymptotic error bound for the  1-bit CS in the overdetermined setting.
Comparing with the estimation error of the standard least squares in the complete
data model $y = \Psi x^* + \epsilon$,
the error bound in Theorem \ref{errls} is optimal up to a logarithm factor $\sqrt{\log n}$, which is due to the loss of information with the  1-bit quantization.
\end{remark}

\section{Sparse decoding with $\ell_1$-regularized least squares}

\subsection{Nonasymptotic error bound}
Since  images and signals are often  sparsely represented  under certain transforms \cite{Mallat:2008,DongShen:2010}, it suffices for the
standard  CS   to  recover the  sparse signal $x^*\in \mathcal{R}^{n}$ with  $m = \mathcal{O}(s \log n)$ measurements for  $s = \|x^*\|_0$ .  In this section we show that  in the  1-bit CS setting, similar results  can be derived through the  $\ell_1$-regularized least squares \eqref{subreg}.
  Model \eqref{subreg} has been extensively studied when $y$ is continuous   \cite{Tibshirani:1996,Chen:1998,CandesRombergTao:2006,Donoho:2006}. Here we use model \eqref{subreg} to recover $x^*$ from quantized $y$, which is rarely studied in the literature.

Next we show that, up to the constant $c$, $x_{\ell_{1}}$ is a good estimate of $x^*$ when $m = \mathcal{O}(s \log n)$ even if the signal  is highly noisy and  corrupted by sign flips in the 1-bit CS setting.

 \begin{theorem}\label{errsub}
 Assume
$ n> m \geq  \max\{\frac{4C_1}{C_2^2}\log n, \frac{64(4\kappa(\Sigma)+1)^2}{C_1}s\log\frac{en}{s}\}$,  $s \leq \exp^{(1-\frac{C_1}{2})} n$.  Set $\lambda = \frac{4(1+C_3|c|)}{\sqrt{C_1}}\sqrt{\frac{\log n}{m}} $. Then
with probability at least $1-2/n^3  -6/n^2$, we have,
\begin{equation}\label{lssub}
 \| x_{\ell_{1}}/c - x^*\|_2\leq \frac{816(4\kappa(\Sigma)+1)^2}{\gamma_{min}(\Sigma)} \frac{\sigma+1+C_3 |q-1/2|}{\sqrt{C_1}|q-1/2|}\sqrt{\frac{s\log n}{m}}.
\end{equation}
\end{theorem}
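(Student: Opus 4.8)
The plan is to recognize \eqref{subreg} as a standard Lasso problem in disguise and run the classical oracle-inequality machinery, with the population identity of Lemma~\ref{parallel} supplying the crucial centering. Set $\beta^* := c\,x^*$, so that $\|x_{\ell_1}/c - x^*\|_2 = |c|^{-1}\|x_{\ell_1}-\beta^*\|_2$ and $\beta^*$ shares the support $S := \mathrm{supp}(x^*)$ of cardinality $s$. Combining \eqref{ub1}--\eqref{ub2} with Lemma~\ref{parallel} gives $\mathbb{E}[\Psi^t y/m] = c\,\Sigma x^* = \Sigma\beta^* = \mathbb{E}[\Psi^t\Psi/m]\,\beta^*$, so the residual $w := y-\Psi\beta^*$ obeys $\mathbb{E}[\Psi^t w/m] = \mathbf{0}$. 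Thus $\beta^*$ plays the role of the true regressor in an effective (mean-zero-noise) linear model and $x_{\ell_1}$ is exactly its Lasso estimator; it remains to carry out the Lasso error analysis for this model and divide by $|c|$ at the end.

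First I would write the basic inequality. Since $x_{\ell_1}$ minimizes the objective and $\beta^*$ is feasible, with $h := x_{\ell_1}-\beta^*$ and $z := \tfrac{1}{m}\Psi^t w$ one obtains
\begin{equation*}
\frac{1}{2m}\|\Psi h\|_2^2 \leq \langle z, h\rangle + \lambda\bigl(\|\beta^*\|_1 - \|x_{\ell_1}\|_1\bigr).
\end{equation*}
The decisive quantity is $\|z\|_\infty$. Because $|y_i| = 1$ and $\psi_i^t\beta^*$ is centred Gaussian with variance $\|\beta^*\|_\Sigma^2 = c^2$, each coordinate $z_j = \tfrac{1}{m}\sum_i (\psi_i)_j(y_i-\psi_i^t\beta^*)$ is an average of iid, mean-zero, \emph{sub-exponential} summands (a bounded times Gaussian term plus a product of two Gaussians). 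A Bernstein inequality for sub-exponential variables, together with a union bound over the $n$ coordinates, yields $\|z\|_\infty \leq C\sqrt{\log n/m}$ with probability at least $1-6/n^2$, the sub-exponential norm scaling like $\sqrt{\gamma_{\max}(\Sigma)}\,(1+|c|)$; this is precisely what calibrates the stated $\lambda = \tfrac{4(1+C_3|c|)}{\sqrt{C_1}}\sqrt{\log n/m}$ so that $\lambda \geq 2\|z\|_\infty$. The hypothesis $m \geq \tfrac{4C_1}{C_2^2}\log n$ keeps this deviation in the sub-Gaussian regime of Bernstein, so the rate is $\sqrt{\log n/m}$ rather than $\log n/m$.

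On the event $\lambda \geq 2\|z\|_\infty$, splitting $\|\beta^*\|_1$ and $\|x_{\ell_1}\|_1$ over $S$ and $S^c$ forces $h$ into the cone $\|h_{S^c}\|_1 \leq 3\|h_S\|_1$, whence $\|h\|_1 \leq 4\sqrt{s}\,\|h\|_2$. The remaining ingredient is a restricted eigenvalue bound $\tfrac{1}{m}\|\Psi h\|_2^2 \gtrsim \gamma_{\min}(\Sigma)\|h\|_2^2$ over this cone; for rows drawn from $\mathcal{N}(\mathbf{0},\Sigma)$ this holds with probability at least $1-2/n^3$ once $m \gtrsim (4\kappa(\Sigma)+1)^2\,s\log(en/s)$, which is exactly the second lower bound on $m$ (the condition $s \leq e^{1-C_1/2}n$ ensures $\log(en/s)$ and $\log n$ are comparable). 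The factor $(4\kappa(\Sigma)+1)^2$ and the denominator $\gamma_{\min}(\Sigma)$ in \eqref{lssub} both trace back to establishing this condition for a general (non-identity) covariance. Feeding the cone bound and the restricted eigenvalue bound into the basic inequality gives the standard conclusion $\|h\|_2 \lesssim \lambda\sqrt{s}/\gamma_{\min}(\Sigma)$; dividing by $|c| = |2q-1|\sqrt{2/(\pi(\sigma^2+1))}$ converts $1/|c|$ into the factor $(\sigma+1+C_3|q-1/2|)/|q-1/2|$ and produces the final rate $\sqrt{s\log n/m}$, while a union bound over the two failure events yields the probability $1-2/n^3-6/n^2$.

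I expect the genuine obstacle to be the concentration of $\|z\|_\infty$. Unlike the classical Lasso, the ``noise'' $w = y-\Psi\beta^*$ is neither independent of $\Psi$ nor Gaussian: $y$ depends on $\Psi$ through $\mathrm{sign}(\Psi x^*+\epsilon)$ and on the flips $\eta$. The saving grace is that Lemma~\ref{parallel} renders $z$ mean-zero and that $|y_i|=1$ keeps the summands sub-exponential rather than heavier-tailed; the care lies in tracking the sub-exponential norm explicitly in terms of $\sigma$, $q$, and $\gamma_{\max}(\Sigma)$ so that the constants in $\lambda$ and in \eqref{lssub} come out correctly. Establishing the restricted eigenvalue property uniformly over the cone for a general $\Sigma$ is the second technical point, but it follows from now-standard Gaussian concentration once the sub-exponential bookkeeping is in place.
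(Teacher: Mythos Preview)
Your plan is correct and follows the paper's proof almost step for step: recast \eqref{subreg} as a Lasso with target $\widetilde{x}^* = cx^*$, derive the basic inequality, bound $\|\Psi^t\Delta/m\|_\infty$ so that $\lambda \geq 2\|\Psi^t\Delta/m\|_\infty$ forces the error into the cone $\|h_{S^c}\|_1 \leq 3\|h_S\|_1$, establish a restricted-eigenvalue lower bound over that cone, and combine. The one place the paper proceeds slightly differently is the noise step: rather than treating each $(\psi_i)_j w_i$ as a single mean-zero sub-exponential, it first decomposes $\Psi^t\Delta/m = (\Psi^t\Psi/m-\Sigma)\widetilde{x}^* + (\mathbb{E}[\Psi^t y/m]-\Psi^t y/m)$ and bounds the two pieces in $\|\cdot\|_\infty$ separately via Bernstein---this is precisely where the constant $C_3 \geq \|x^*\|_1$ enters, through H\"older on the first term, so your direct route would work but would yield a different-looking constant in place of $C_3$.
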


 \begin{proof}
The proof  is given in Appendix \ref{app:errsub}.
\end{proof}

\begin{remark}
Theorem \ref{errsub} shows that, $\forall \delta \in (0,1)$, if  $m \geq \mathcal{O}(\frac{s \log n }{ \delta^2})$,  up to a constant $c$, the $\ell_1$-regularized least squares solution can  approximate  $x^*$  with precision  $\delta$.
\end{remark}

\begin{remark}
The error bound in Theorem \ref{errsub} achieves  the minimax optimal order  $\mathcal{O}(\sqrt{\frac{s \log n}{m}})$ in the sense that  it is the optimal order that can be attained even if  the signal  is measured precisely without 1-bit quantization \cite{Negahban:2012}. From   Theorem \ref{errsub} if the minimum nonzero  magnitude of $x^*$ is large enough, i.e., $|x^*|_{\min} \geq \mathcal{O}(\sqrt{\frac{s\log n}{m}})$, the support of $x_{\ell_{1}}$ coincides with that of $x^*$. 
\end{remark}

\subsection{Comparison with related  works}
Assuming   $\|x^*\|_2 = 1$ and $\sigma = 0$ and $q = 1$, \cite{BoufounosBaraniuk:2008} proposed to decode $x^*$ with \begin{equation*}
\min_{x\in \mathcal{R}^n} \|x\|_1\quad s.t.\quad y\odot \Psi x \geq  0, \quad \|x\|_2 = 1.
 \end{equation*}
 A first order algorithm was devised  to solve the following  Lagrangian version  \cite{LaskaWenYinBaraniuk:2011}, i.e.,
   \begin{equation*}
\min_{x\in \mathcal{R}^n}  \|\max\{\textbf{0},-y\odot \Psi x\}\|_2^2 + \lambda\|x\|_1\quad s.t. \quad \|x\|_2 = 1.
 \end{equation*}
 In the presence of   noise, \cite{JacquesLaska:2013} introduced
   \begin{equation}\label{orrobust}
\min_{x\in \mathcal{R}^n}  \mathcal{L}(\max\{\textbf{0},-y\odot \Psi x\}) \quad s.t. \quad \|x\|_0 \leq s, \quad \|x\|_2 = 1,
 \end{equation}
 where  $\mathcal{L}(\cdot) = \|\cdot\|_1$ or $\|\cdot\|_2^2$.
 They used  a projected sub-gradient method, the so-called  binary iterative hard thresholding (BITH), to solve  \eqref{orrobust}.
Assuming that there are sign flips in the noiseless model with  $\sigma = 0$, \cite{DaiShenXuZhang:2016} considered
 \begin{equation}\label{prox}
\min_{x\in \mathcal{R}^n}  \lambda \|\max\{\textbf{0},\nu \textbf{1}-y\odot \Psi x\}\|_{0} + \frac{\beta}{2} \|x\|_2^2 \quad s.t. \quad \|x\|_0 \leq s,
 \end{equation}
 where
 $\nu >  0,  \beta > 0 $ are tuning parameters.
 An  adaptive outlier pursuit (AOP)  generalization of \eqref{orrobust} was proposed in \cite{YanYangOsher:2012} to recover $x^*$ and  simultaneously  detect the entries with sign flips by
   \begin{equation*}
\min_{x\in \mathcal{R}^n, \Lambda \in \mathcal{R}^{m}} \mathcal{L}(\max\{\textbf{0},-\Lambda \odot y\odot \Psi x\}) \quad s.t. \quad \Lambda_i \in \{0,1\},\quad \|\textbf{1} - \Lambda\|_1 \leq N, \quad \|x\|_0 \leq s,  \quad \|x\|_2 = 1,
 \end{equation*}
 where $N$ is the  number of sign flips. Alternating minimization on $x$ and $\Lambda$ are adopted to solve the optimization problem.  \cite{HuangShiYan:2015} considered the pinball loss to deal with both the noise and the sign flips,  which reads
  \begin{equation*}
 \min_{x\in \mathcal{R}^n}  {\mathcal{L}}_{\tau}( \nu \textbf{1}  - y \odot \Psi x\}) \quad s.t. \quad \|x\|_0 \leq s  \quad \|x\|_2 = 1,
 \end{equation*}
 where $\mathcal{L}_{\tau}(t) = t \textbf{1}_{t\geq 0} - \tau t \textbf{1}_{t<0}$. Similar to the  BITH,  the pinball iterative hard thresholding   is  utilized.
  With the binary stable embedding, \cite{JacquesLaska:2013} and  \cite{DaiShenXuZhang:2016}  proved that with high probability, the sample complexity  of \eqref{orrobust} and \eqref{prox} to guarantee
estimation error smaller than $\delta$   is  $m \geq \mathcal{O}(\frac{s}{\delta^2} \log \frac{n}{s})$,  which echoes   Theorem \ref{errsub}.
 However,  there are no theoretical  results for  other models  mentioned  above.
 All the aforementioned models  and algorithms are  the state-of-the-art works  in the 1-bit CS.  However, all the methods mentioned above  are nonconvex. It is thus  hard to justify whether the  corresponding algorithms are loyal to their models.

 Another line of research  concerns convexification.
The pioneering  work   is \cite{PlanVershynincpam:2013}, where they considered the noiseless case without sign flips and   proposed  the following linear programming
method
 \begin{equation*}
 x_{lp}\in \arg\min_{x\in \mathcal{R}^n} \|x\|_1 \quad s.t. \quad  y \odot\Psi x\geq 0  \quad \|\Psi x\|_1  =  m.
 \end{equation*}
 As shown in \cite{PlanVershynincpam:2013},  the estimation error is   $\|\frac{x_{lp}}{\|x_{lp}\|} - x^*\|\leq \mathcal{{O}}(({s \log^2 \frac{n}{s}})^{\frac{1}{5}})$.
 The above result is improved to $ \|\frac{x_{cv}}{\|x_{cv}\|} - x^*\|\leq \mathcal{{O}}(({s \log \frac{n}{s}})^{\frac{1}{4}})$ in \cite{PlanVershynin:2013}, where both the noise and the sign flips are allowed,  through  considering the convex problem
  \begin{equation}\label{convex1}
 x_{cv} \in \arg\min_{x\in \mathcal{R}^n} -\langle y, \Psi x\rangle/m \quad s.t. \quad \|x\|_1 \leq s,  \quad \|x\|_2 \leq 1.
 \end{equation}
 Comparing  with   our result in Theorem \ref{errsub}, the results derived  in \cite{PlanVershynincpam:2013} and \cite{PlanVershynin:2013} are suboptimal.

 In the noiseless case and assuming $\Sigma = \textbf{I}_n$,  \cite{ZhangYiJin:2014} considered the Lagrangian version of  \eqref{convex1}
  \begin{equation}\label{convex2}
 \min_{x\in \mathcal{R}^n} -\langle y, \Psi x\rangle/m  +  \lambda \|x\|_1 \quad s.t.  \quad \|x\|_2 \leq 1.
 \end{equation}
 In this special case, the estimation error  derived in \cite{ZhangYiJin:2014} improved that of   \cite{PlanVershynin:2013} and matched our  results in  Theorem \ref{errsub}. However, \cite{ZhangYiJin:2014} did not  discuss the scenario of    $\Sigma \neq \textbf{I}_n.$
  Recently \cite{PlanVershynin:2017,Vershynin:2015},  proposed a simple projected linear estimator $P_{K}(\Psi^t y/m)$, where $K = \{x\big |\|x\|_1 \leq s, \|x\|_2 \leq 1\}$,  to estimate the low-dimensional structure target belonging to  $K$  in high dimensions from noisy and possibly nonlinear observations. They derived the same  order of estimation error as that in Theorem \ref{errsub}.

 \cite{ZymnisBoydCandes:2010} proposed an
 $\ell_1$ regularized maximum likelihood estimate,  and \cite{HuangShiYan:2015} introduced a convex model through  replacing the linear loss in \eqref{convex2} with the  pinball loss. However, neither   studied sample complexity or estimation error.


\section{Primal dual active set algorithm}

Existing algorithms  for \eqref{subreg}  are mainly first order methods \cite{csnmr,BJMG,pfbsr}.
In this section we use  primal dual active set method \cite{FanJiaoLu:2014,JiaoJinLu:2015}, which is  a generalized Newton type method, \cite{ItoKunisch:2008,QiSun:1993}  to solve \eqref{subreg}.
An important advantage of the  PDAS is that it converges after one-step iteration if the initial value is good enough.
We globalize it with  continuation  on regularization parameter. We also
propose a  novel  regularization parameter selection rule  which is  incorporated along the continuation  procedure without any additional computational burden.

\subsection{PDAS}
In this section we use $x$ to denote $x_{\ell_{1}}$ for simplicity.
We begin with
the  following  results \cite{Combettes:2005}.
Let $x$ be the minimizer of \eqref{subreg}, then $x$ satisfies
\begin{equation}\label{kkteq}
\left\{
                             \begin{array}{ll}
                               d = \Psi^t(y-\Psi x)/m, \\
                               x = \mathcal{S}_{\lambda }(x +  d).
                             \end{array}
                           \right.
\end{equation}
Conversely, if $x$ and $d$ satisfy \eqref{kkteq}, then $x$ is a global minimizer of \eqref{subreg},
where $\mathcal{S}_{\lambda }(z)$ is the pointwise  soft-thresholding operator defined by
 $\mathcal{S}_{\lambda }(z_i) = \arg \min_{t} \frac{1}{2}(t-z_i)^2+ \lambda|t|_{1} =  sign(z_i)\max\{{|z_i|-\lambda,0}\}
.$


Let $Z =\left(
         \begin{array}{c}
         x \\
          d  \\
         \end{array}
       \right) \ \mbox{ and } \
$
$
F(Z)=
\left(
\begin{array}{c}
 F_{1}(Z)\\
F_{2}(Z)
\end{array}
 \right)
:\mathcal{R}^{n} \times \mathcal{R}^{n} \rightarrow \mathcal{R}^{2n},
$
where
$
  F_{1}(Z)= x  - \mathcal{S}_{\lambda}(x + d) \text{ and }
  F_{2}(Z)= \Psi^t \Psi x + m d -\Psi^t y.
$
By \eqref{kkteq}, finding  the  minimizer $x$ of \eqref{subreg} is equivalent   to finding the  root of $F(Z)$.
We use the following primal dual active set method (PDAS) \cite{FanJiaoLu:2014,JiaoJinLu:2015} to find the root of $F(Z)$.
{
\begin{algorithm}
   \caption{PDAS: $x_\lambda\leftarrow \textsl{pdas}(y,\Psi,\lambda,x^0,\textsl{MaxIter}) $}\label{alg:genew}
   \begin{algorithmic}[1]
     \STATE Input $y,\Psi,\lambda$,  initial guess $x^0$, maximum number of iteration \textsl{MaxIter}. Let $d^0= \Psi^t( y -\Psi x^0)/m$.
     \FOR {$k=0,1,...\textsl{MaxIter}$}
     \STATE Compute the active and inactive sets $\calA_k$ and $\calI_k$ respectively by
         \begin{equation*}
           \calA_k = \{i\in [n]\big| |x^{k}_i+ d^{k}_i| >\lambda\}\quad\mbox{and}\quad \calI_k = \overline{\mathcal{A}_k}.
         \end{equation*}
     \STATE Update $x^{k+1}$ and $d^{k+1}$  by
       \begin{equation*}
        \left\{
          \begin{array}{l}
           x_{\mathcal{I}_{k}}^{k+1} = \textbf{0}, \\[1.2ex]
           d_{\mathcal{A}_{k}}^{k+1} =\lambda sign(x_{\mathcal{A}_k}^k+ d_{\mathcal{A}_k}^k), \\[1.2ex]
           (\Psi_{\mathcal{A}_{k}}^t\Psi_{\mathcal{A}_{k}}) x_{\mathcal{A}_{k}}^{k+1} =\Psi_{\mathcal{A}_{k}}^t(y - m d_{\mathcal{A}_{k}}^{k+1}),  \\[1.2ex]
           d_{\mathcal{I}_{k}}^{k+1} = \Psi_{\mathcal{I}_{k}}^t(y-\Psi_{\mathcal{A}_{k}} x_{\mathcal{A}_{k}}^{k+1})/m.
          \end{array}\right.
       \end{equation*}
     \STATE If $\mathcal{A}_{k} = \mathcal{A}_{k+1}$, stop.
     \ENDFOR
   \STATE Output $x_{\lambda}$.
   \end{algorithmic}
\end{algorithm}
}
\begin{remark}
  We can  stop when $k$ is greater  than   a user-predefined  \textsl{MaxIter}. Since    the PDAS  converges after  one iteration,  a desirable property stated in  Theorem \ref{thm:cov}, we set  $\textsl{MaxIter} = 1$.
\end{remark}

The PDAS is actually a generalized Newton method for finding roots of nonsmooth equations \cite{ItoKunisch:2008,QiSun:1993}, since the iteration in Algorithm \ref{alg:genew} can be equivalently  reformulated as
\begin{align}
 \quad & J^k D^k  = -  F(Z^k), \label{newton1}\\
\quad & Z^{k+1} = Z^k + D^k,\label{newton2}
\end{align}
where
\begin{equation}\label{Jacobi}
J^k = \left(
        \begin{array}{cc}
            J^k_{1} & J^k_{2}\\
            \Psi^t\Psi & m\textbf{I} \\
          \end{array}
        \right)
, \quad
J^k_{1} = \left(
          \begin{array}{cc}
            \textbf{0}_{{\cal A}_k,{\cal A}_k} & \textbf{0}_{{\cal A}_k,{\cal I}_k} \\
            \textbf{0}_{{\cal I}_k,{\cal A}_k}& \textbf{I}_{{\cal I}_k,{\cal I}_k}\\
          \end{array}
        \right)
 \quad \textrm{and} \quad
  J^k_{2} = \left(
          \begin{array}{cc}
            -I_{{\cal A}_k,{\cal A}_k} & \textbf{0}_{{\cal A}_k,{\cal I}_k}\\
            \textbf{0}_{{\cal I}_k,{\cal A}_k}& \textbf{0}_{{\cal I}_k,{\cal I}_k}\\
          \end{array}
        \right).
 \end{equation}
We prove  this equivalency   in  Appendix \ref{app:shownewton} for completeness.

Local superlinear convergence has been established for generalized Newton methods for nonsmooth equations \cite{ItoKunisch:2008,QiSun:1993}.
  The  PDAS require one iteration to  convergence. We state the results here for completeness, which is proved in   \cite{FanJiaoLu:2014}.

\begin{theorem}\label{thm:cov}
Let $x$ and $d$ satisfy    \eqref{kkteq}. Denote  $\mathcal{\widetilde {A}} =
\{i\in [n] \big | |x_i + d_i|\geq \lambda \}$  and
  $\omega = \min_{i \in [n], |x_i + d_i| \neq \lambda } \big \{||x_i + d_i| - \lambda|\}.$
  Let $x^0$ and  $d^0 = \Psi^t(y - \Psi x^0)/m$ be  initial input of   algorithm \ref{alg:genew}.
If  the columns of $\Psi_{\mathcal{\widetilde {A}}} $ are   full-rank and  the initial input  satisfies
$\|x - x^0\|_{\infty} + \|d - d^0\|_{\infty} \leq \omega.$
Then,
 $x^1 = x$, where $x^1$ is    updated  from  $x^0$ after one iteration.
\end{theorem}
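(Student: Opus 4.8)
The plan is to show that a single sweep of Algorithm~\ref{alg:genew} reproduces the KKT solution of \eqref{kkteq} exactly, which I would split into two essentially independent tasks: (i) proving that the active set $\mathcal{A}_0$ computed in the first iteration brackets the active set of the solution, and (ii) checking that, once the active set is correctly bracketed, the primal update in Step~4 returns $x$ itself. Everything is driven by the KKT characterization \eqref{kkteq}, so I would begin by reading its structure off the soft-thresholding identity. Set $\mathcal{A}_{\star} := \{i\in[n] : |x_i+d_i|>\lambda\}$, so that $\widetilde{\mathcal{A}} = \mathcal{A}_{\star}\cup\{i : |x_i+d_i|=\lambda\}$. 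From $x=\mathcal{S}_{\lambda}(x+d)$ one gets $x_i=0$ for every $i\notin\mathcal{A}_{\star}$ (both strictly inactive and boundary indices give $x_i=0$), while for $i\in\mathcal{A}_{\star}$ one has $x_i=\textrm{sign}(x_i+d_i)(|x_i+d_i|-\lambda)\neq 0$ and hence $d_i=(x_i+d_i)-x_i=\lambda\,\textrm{sign}(x_i+d_i)$. Thus $x$ is supported on $\mathcal{A}_{\star}$ and $d_{\mathcal{A}_{\star}}=\lambda\,\textrm{sign}(x_{\mathcal{A}_{\star}}+d_{\mathcal{A}_{\star}})$.

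The next step is the active-set identification. Because $d^0=\Psi^t(y-\Psi x^0)/m$ is built from $x^0$ by the same rule as $d$ from $x$, the hypothesis furnishes the pointwise bound $|(x_i^0+d_i^0)-(x_i+d_i)|\le \|x-x^0\|_{\infty}+\|d-d^0\|_{\infty}\le\omega$ for all $i$. Combining this with the definition of $\omega$, namely the smallest gap $\big||x_i+d_i|-\lambda\big|$ over off-boundary indices, I expect to obtain the sandwich $\mathcal{A}_{\star}\subseteq\mathcal{A}_0\subseteq\widetilde{\mathcal{A}}$: an index with $|x_i+d_i|>\lambda$ stays above $\lambda$ after an $\omega$-perturbation, and an index with $|x_i+d_i|<\lambda$ stays strictly below, so it is excluded from $\mathcal{A}_0$. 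The same bound keeps $x_i^0+d_i^0$ on the same side of zero as $x_i+d_i$ for $i\in\mathcal{A}_0$, so that $\textrm{sign}(x_{\mathcal{A}_0}^0+d_{\mathcal{A}_0}^0)=\textrm{sign}(x_{\mathcal{A}_0}+d_{\mathcal{A}_0})$ and the dual update gives $d_{\mathcal{A}_0}^1=\lambda\,\textrm{sign}(x_{\mathcal{A}_0}+d_{\mathcal{A}_0})=d_{\mathcal{A}_0}$.

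Finally I would verify the primal update. Write $\mathcal{A}_0=\mathcal{A}_{\star}\cup\mathcal{C}$ with $\mathcal{C}\subseteq\widetilde{\mathcal{A}}\setminus\mathcal{A}_{\star}$ a set of boundary indices on which $x$ vanishes. Then $x_{\mathcal{I}_0}=\textbf{0}=x_{\mathcal{I}_0}^1$, and since $x$ is supported in $\mathcal{A}_{\star}\subseteq\mathcal{A}_0$ we have $\Psi x=\Psi_{\mathcal{A}_0}x_{\mathcal{A}_0}$. Restricting the first line of \eqref{kkteq}, i.e. $md=\Psi^t y-\Psi^t\Psi x$, to the rows in $\mathcal{A}_0$ and using $d_{\mathcal{A}_0}^1=d_{\mathcal{A}_0}$ yields $\Psi_{\mathcal{A}_0}^t\Psi_{\mathcal{A}_0}x_{\mathcal{A}_0}=\Psi_{\mathcal{A}_0}^t y-md_{\mathcal{A}_0}^1$, which is precisely the normal equation that defines $x_{\mathcal{A}_0}^1$ in Step~4. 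Because $\mathcal{A}_0\subseteq\widetilde{\mathcal{A}}$ and the columns of $\Psi_{\widetilde{\mathcal{A}}}$ are full-rank, the Gram matrix $\Psi_{\mathcal{A}_0}^t\Psi_{\mathcal{A}_0}$ is invertible, so the solution is unique and $x_{\mathcal{A}_0}^1=x_{\mathcal{A}_0}$; together with $x_{\mathcal{I}_0}^1=x_{\mathcal{I}_0}$ this gives $x^1=x$.

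The step I expect to be the crux is the active-set identification, and specifically the treatment of the boundary indices where $|x_i+d_i|=\lambda$: these are exactly the indices that $\omega$ ignores and on which $\mathcal{A}_0$ may disagree with $\mathcal{A}_{\star}$. The two structural hypotheses are tailored to this difficulty: defining $\omega$ only over off-boundary indices makes the inclusion $\mathcal{A}_{\star}\subseteq\mathcal{A}_0\subseteq\widetilde{\mathcal{A}}$ robust, while imposing full column rank on the \emph{larger} matrix $\Psi_{\widetilde{\mathcal{A}}}$ (rather than on $\Psi_{\mathcal{A}_{\star}}$) guarantees the normal equations stay solvable no matter which boundary indices $\mathcal{A}_0$ happens to absorb into $\mathcal{C}$. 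The most delicate bookkeeping will be the sign-preservation claim on $\mathcal{A}_0$, where I must check that the $\omega$-perturbation is smaller than $|x_i+d_i|$ on $\mathcal{A}_{\star}$ (immediate, since $|x_i+d_i|\ge\lambda+\omega$) and also on the absorbed boundary indices $\mathcal{C}$, together with the borderline case of an index attaining the minimum in $\omega$ under the non-strict perturbation bound.
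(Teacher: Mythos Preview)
The paper does not actually prove Theorem~\ref{thm:cov}; immediately after the statement it simply says ``We state the results here for completeness, which is proved in \cite{FanJiaoLu:2014}.'' There is therefore nothing in the paper to compare your argument against.

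On its own merits your plan is the standard and correct one. The decomposition into (i) the sandwich $\mathcal{A}_\star\subseteq\mathcal{A}_0\subseteq\widetilde{\mathcal{A}}$ from the $\omega$-gap, (ii) sign preservation so that $d_{\mathcal{A}_0}^1=d_{\mathcal{A}_0}$, and (iii) matching the KKT normal equations on $\mathcal{A}_0$ and invoking invertibility of $\Psi_{\mathcal{A}_0}^t\Psi_{\mathcal{A}_0}$ via the full-rank assumption on $\Psi_{\widetilde{\mathcal{A}}}$, is exactly how the one-step convergence is established. Your explanation of \emph{why} the hypotheses are phrased with $\widetilde{\mathcal{A}}$ rather than $\mathcal{A}_\star$, and with $\omega$ taken only over non-boundary indices, is also on target. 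The two edge cases you flag at the end are indeed the only delicate points: for sign preservation on the absorbed boundary indices $i\in\mathcal{C}$ note that a flip together with $|x_i^0+d_i^0|>\lambda$ would force the perturbation to exceed $2\lambda$, which is ruled out whenever at least one strictly inactive index exists (then $\omega\le\lambda$); and the non-strict bound $\|x-x^0\|_\infty+\|d-d^0\|_\infty\le\omega$ does leave a knife-edge configuration that is most cleanly handled by reading the hypothesis as strict.
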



\subsection{Globalization and automatic regularization parameter selection}

To  apply the PDAS  (Algorithm \ref{alg:genew}) to  \eqref{subreg}, we need to have an  initial guess $x^0$ and specify a proper regularization parameter $\lambda$ in $\textsl{pdas}(y,\Psi,\lambda,x^0,\textsl{MaxIter})$. In this section, we address  these two issues together with continuation.  Since the PDAS is a Newton type algorithm with fast   local  convergence rate and $x_{\ell_1}$ is piecewise linear  function of $\lambda$ \cite{Osborne:2000},    we adopt a
continuation   to fully exploit the fast local convergence. In particular,
this is a simple way to globalize the convergence of PDAS \cite{FanJiaoLu:2014}.
   Observing that $x = \textbf{0}$ satisfies  \eqref{kkteq} if $\lambda \geq \lambda_0 = \|\Psi^t y/m\|_{\infty}$, we define  $\lambda_t = \lambda_0 \rho^t$ with  $\rho\in(0,1)$ for $t=1, 2, \ldots$.  We
 run  Algorithm \ref{alg:genew} on the sequence $\{\lambda_t\}_{t}$ with warmstart, i.e., using  the solution $x_{\lambda_t}$
as an initial guess for the $\lambda_{t+1}$-problem. When the whole continuation   is done we obtain a solution path of \eqref{subreg}. For simplicity, we refer to the
 PDAS with continuation as PDASC described in Algorithm \ref{alg:hotgenew}.

\begin{algorithm}[h!]
   \caption{PDASC:  {$\{x_{\lambda_t}\}_{t\in [\textsl{MaxGrid}]}\leftarrow \textsl{pdasc}(y,\Psi,\lambda_0,x_{\lambda_0},\rho,\textsl{MaxGrid},\textsl{MaxIter})$}}\label{alg:hotgenew}
   \begin{algorithmic}[1]
   \STATE Input, $y, \Psi$, $\lambda_0 = \|\Psi^t y/m\|_{\infty}$, $x_{\lambda_0} = \textbf{0}$, $\rho \in (0,1)$, $\textsl{MaxGrid}$, \textsl{MaxIter}.
     \FOR {$t=1,2,...\textsl{MaxGrid}$}
     \STATE Run algorithm \ref{alg:genew} $x_{\lambda_t}\leftarrow\textsl{pdas}(y,\Psi,\lambda,x^0,\textsl{MaxIter})$   with $\lambda=\lambda_t = \rho^t\lambda_0$, initialized with $x^0 = x_{\lambda_{t-1}}$.
     \STATE Check the stopping criterion.
     \ENDFOR
     \STATE Output, $\{x_{\lambda_t}\}_{t\in [\textsl{MaxGrid}]}$.
   \end{algorithmic}
\end{algorithm}

The regularization parameter
$\lambda$ in the $\ell_1$-regularized  1-bit CS  model \eqref{subreg}, which compromises the tradeoff between  data fidelity and
the sparsity level of the solution, is important for theoretical analysis and practical computation.
 However, the  well known regularization parameter selection rules such as discrepancy principle \cite{Engle:1996,ItoJin:2014}, balancing principle  \cite{ClasonJinKunisch:2010,JinZhou:2012,ClasonBangtiKunisch:2011,ItoJinTakeuchi:2011} or Bayesian information criterion  \cite{FanJiaoLu:2014,Konishi:2008}, are not applicable to the 1-bit CS problem considered here, since the model errors are not available directly. 
Here we propose a  novel rule  to select regularization  parameter  automatically.
We  run the   PDASC to yield a solution path until
$\|x_{\lambda_T}\|_0 > \lfloor \frac{m}{\log n} \rfloor$
for the smallest possible   $T$.
Let $S_{\ell} = \{\lambda_t:
  \|x_{\lambda_t}\|_0  = \ell, t = 1,..., T \}, \ell  =1,...,\lfloor \frac{m}{\log n} \rfloor$ be the set of regularization parameter at which the output of PDAS has $\ell$ nonzero elements.
  We determine $\lambda$ by voting, i.e.,
  \begin{equation}\label{autoreg}
  \hat{\lambda} = \max \{S_{\bar{\ell}}\} \quad  \textrm{and} \quad \bar{\ell } = \arg\max_{\ell}\{|S_{\ell}|\}.
  \end{equation}
   Therefore, our parameter selection  rule is  seamlessly integrated with the continuation  strategy which serves as a globalization technique
 without any extra computational overhead.

Here we give an example to show the accuracy of our proposed regularization parameter selection rule \eqref{autoreg} with data $\{m = 400, n = 10^3,s = 5,\nu = 0.5,\sigma= 0.01,q = 2.5\%\}$. Descriptions   of the data  can be found  in Section \ref{num}.
 Left panel of Fig. \ref{fig:aspath} shows the size of active set $\|x_{\lambda_t}\|_0$ along the path of PDASC and right panel shows the
  underlying true signal $x^*$ and the solution $x_{\hat{\lambda}}$ selected by  \eqref{autoreg}.

\begin{figure}[htb!]
  \centering
   \begin{tabular}{cc}
   \includegraphics[trim = 0cm 0cm 0cm 0cm, clip=true,width=.4\textwidth]{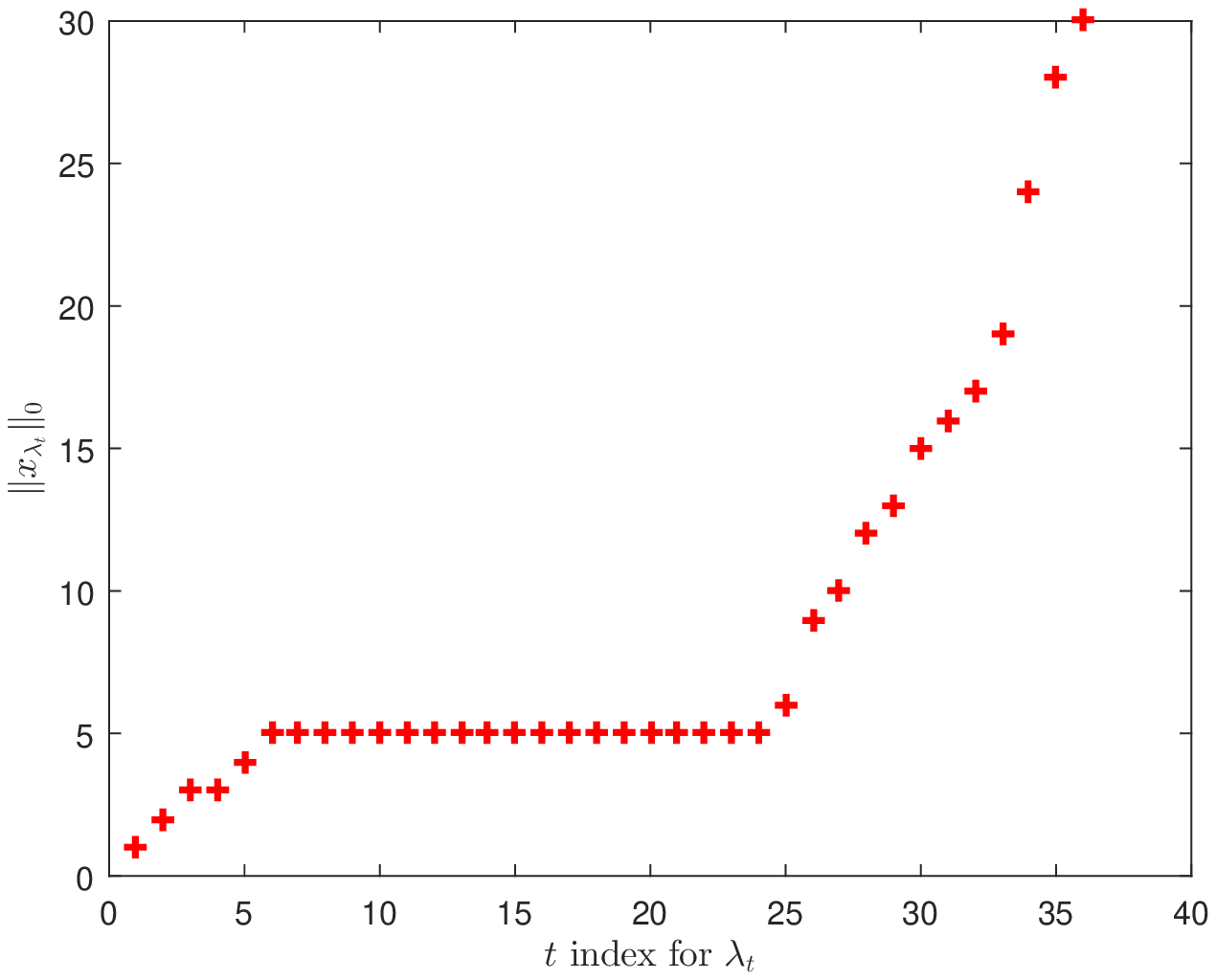}&
   \includegraphics[trim = 0cm 0cm 0cm 0cm, clip=true,width=.4\textwidth]{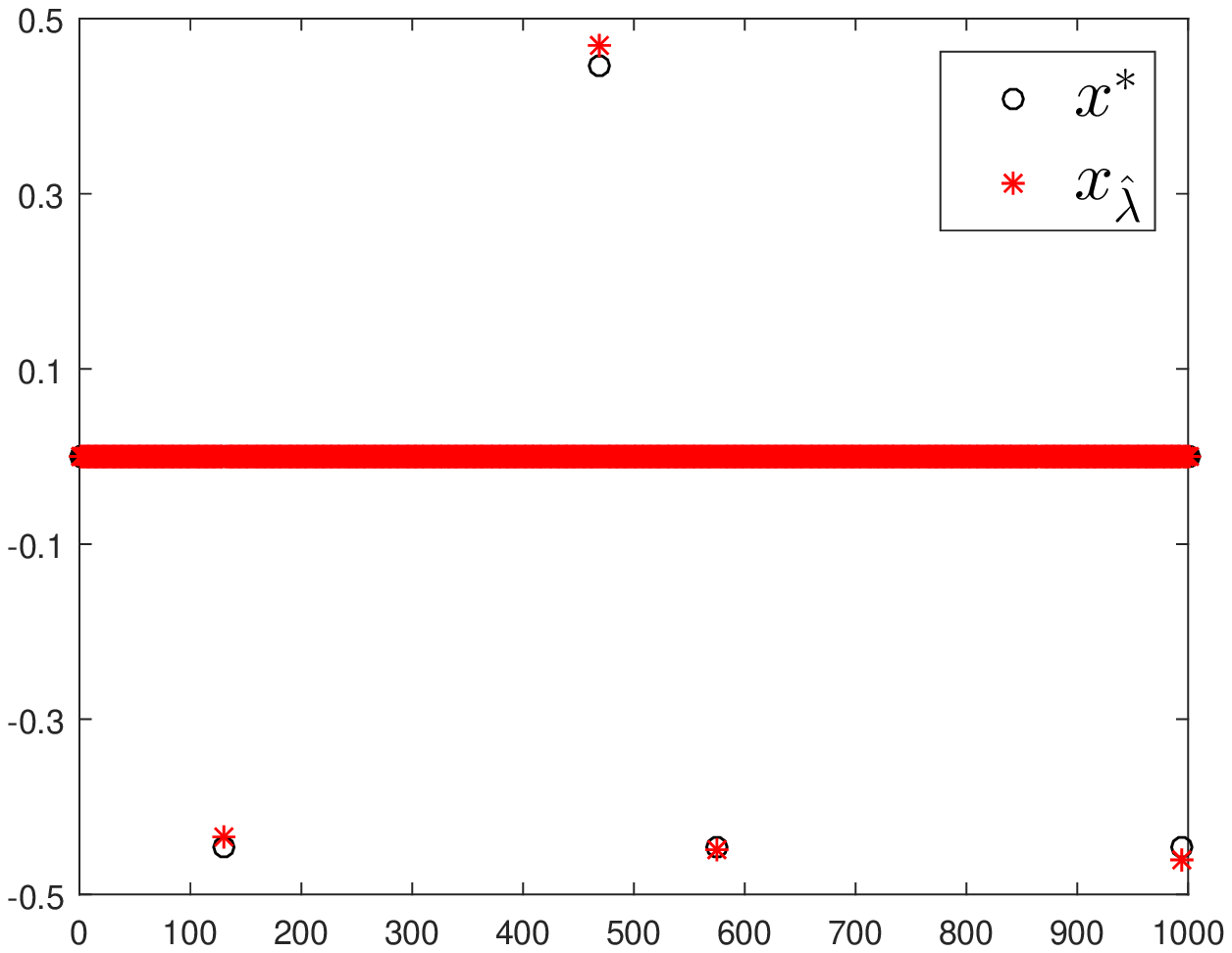}
   \end{tabular}
   \caption{Active set size on the path (left panel)  on data  $\{m = 400, n = 10^3,s = 5,\nu = 0.5,\sigma= 0.01,q = 2.5\%\}$ and the underlying true signal $x^*$ and the solution $x_{\hat{\lambda}}$  selected by \eqref{autoreg} (right panel).}\label{fig:aspath}
\end{figure}


\section{Numerical simulation}\label{num}

In this section we showcase the performance of our proposed least square  decoders \eqref{ls} and \eqref{subreg}.
All the computations were performed on a four-core laptop with 2.90 GHz and 8 GB RAM using \texttt{MATLAB} 2015b.
The \texttt{MATLAB} package \texttt{1-bitPDASC} for reproducing all the numerical results can be found at
\url{http://faculty.zuel.edu.cn/tjyjxxy/jyl/list.htm}.
\subsection{Experiment setup}
First we describe the  data generation process and  our parameter choice. In all numerical examples the underlying target  signal $x^*$ with
$\|x^*\|_0 = s$ is given, and the observation  $y$ is generated by
 $y = \eta \odot\textrm{sign} (\Psi x^* + \epsilon)$,
where the rows of $\Psi$ are iid samples  from  $\mathcal{N}(\textbf{0},\Sigma)$ with $\Sigma_{jk} = \nu^{|j-k|}, 1 \le j, k \le n$. We keep the convention $0^{0} = 1.$ The elements of  $\epsilon$ are generated from $\mathcal{N}(\textbf{0},\textbf{I}_m)$,
$\eta\in \mathcal{R}^{m}$ has independent coordinate $\eta_i$
with $\mathbb{P}[\eta_i = 1] = 1- \mathbb{P}[\eta_i = -1] = q$.  Here, we use $\{m,n,s,\nu,\sigma,q\}$ to denote the data generated as   above  for short. We fix $\rho = 0.95,\textsl{MaxGrid} = 200,\textsl{MaxIter} = 1$ in our proposed  PDASC algorithm and use \eqref{autoreg}  to determine regularization parameter $\lambda$.  All the  simulation results   are based on 100 independent replications.

\subsection{Accuracy and Robustness of $x_{ls}$ when $m>n$}

Now we present numerical results to illustrate the accuracy of the least square decoder $x_{ls}$ and its robustness to the noise and the sign flips.
 Fig. \ref{fig:lowsigma} shows the recovery error $\|x_{ls}-x^*\|$ on data set $\{m = 10^3,n = 10,s = 10,\nu = 0.3,\sigma= 0:0.05:0.5,q = 2.5\%\}$. Left panel of Fig. \ref{fig:lowq}  shows the recovery error $\|x_{ls}-x^*\|$ on data set
  $\{m = 1000,n = 10,s = 10,\nu = 0.3,\sigma= 0.01,q = 0:1\%:10\%\}$ and right  panel gives recovery error $\|x_{ls}+x^*\|$ on data $\{m = 1000,n = 10,s = 10,\nu = 0.3,\sigma= 0.01,q = 90:1\%:100\%\}$.
  It is observed that the recovery error $\|x_{ls}-x^*\|$ ($\|x_{ls}+x^*\|$) of the  least square decoder  is small (around 0.1)  and  robust to  noise level $\sigma$  and sign flips probability $q$. This confirms theoretically investigations    in Theorem \ref{errls}, which states the error is of order  $\widetilde{\mathcal{O}}(\sqrt{\frac{n}{m}}) = 0.1$.

\begin{figure}[htb!]
  \centering
   \includegraphics[trim = 0cm 0cm 0cm 0cm, clip=true,width=.4\textwidth]{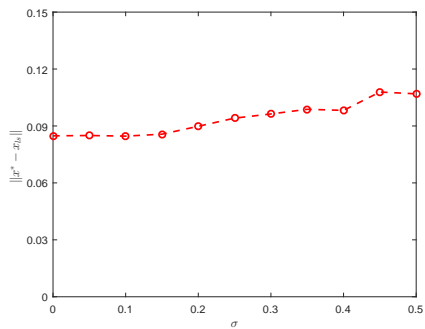}
   \caption{Recovery error  $\|x_{ls}-x^*\|$ v.s. $\sigma$ on $\{m = 1000,n = 10,s = 10,\nu = 0.3,\sigma= 0:0.05:0.5,q = 2.5\%\}$}\label{fig:lowsigma}
\end{figure}

\begin{figure}[htb!]
  \centering
  \begin{tabular}{cc}
   \includegraphics[trim = 0cm 0cm 0cm 0cm, clip=true,width=.4\textwidth]{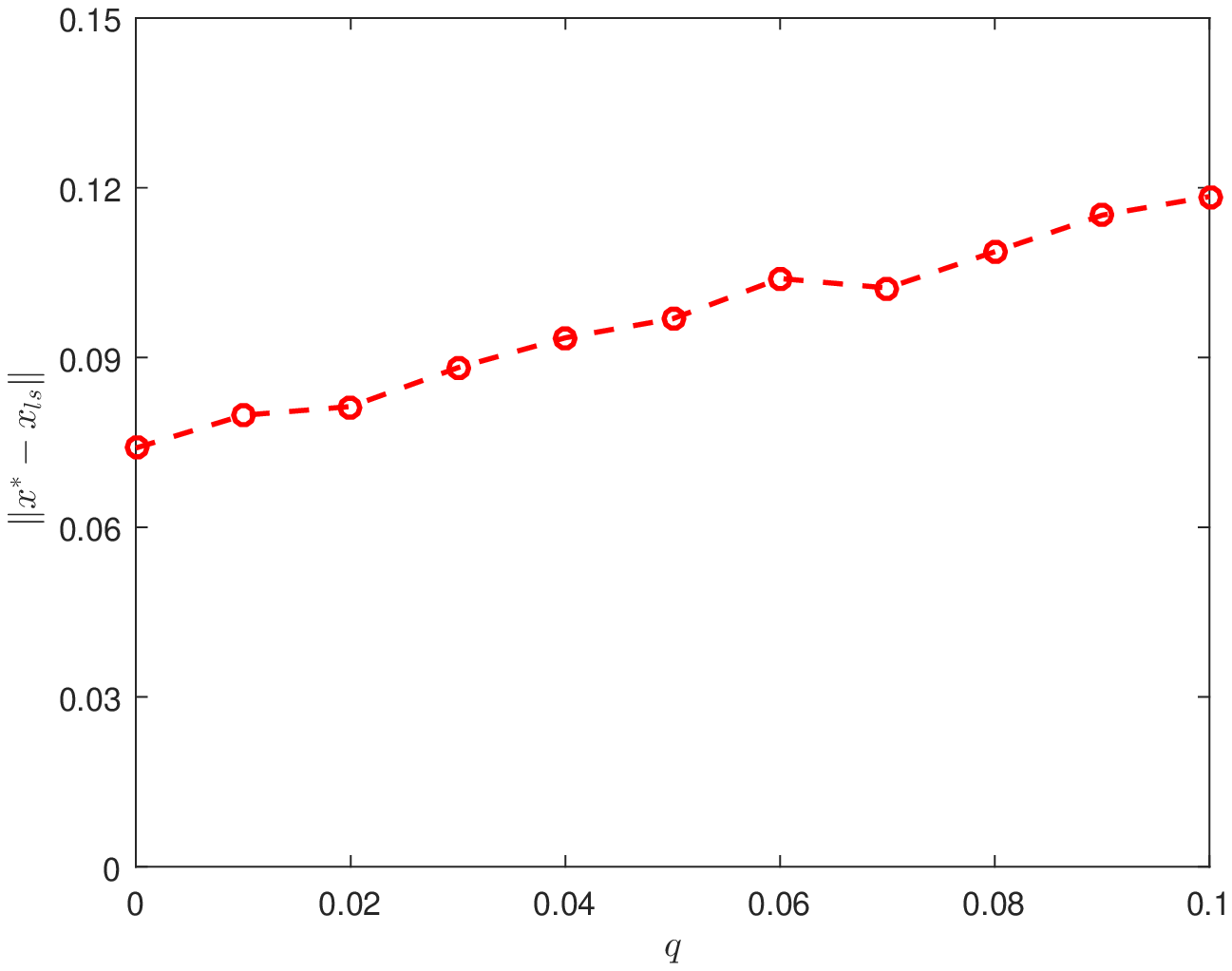}&
   \includegraphics[trim = 0cm 0cm 0cm 0cm, clip=true,width=.4\textwidth]{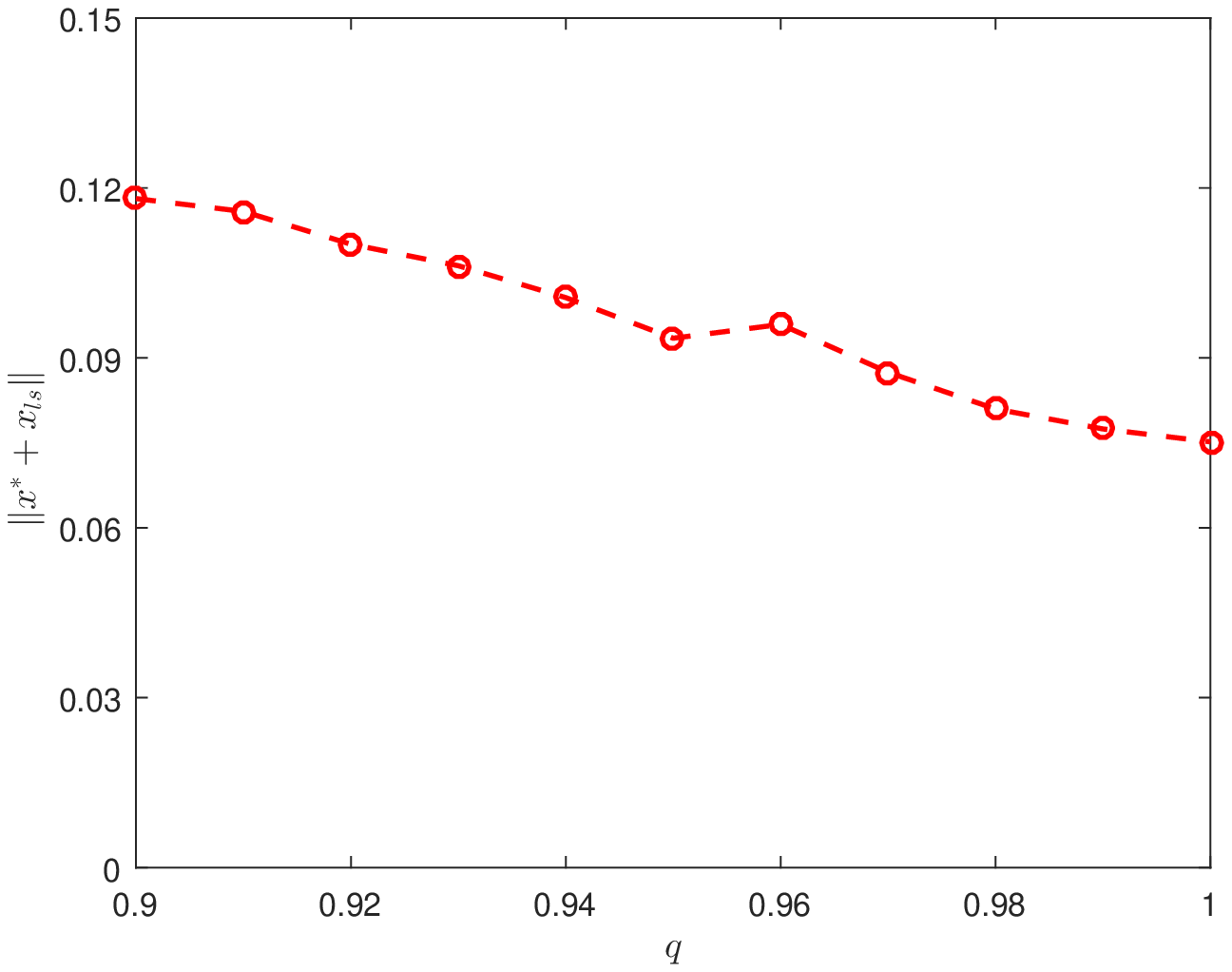}\\
  \end{tabular}
   \caption{Recovery error  $\|x_{ls}-x^*\|$ v.s. $q$ on $\{m = 10^3,n = 10,s = 10,\nu = 0.3,\sigma= 0.01,q = 0:1\%:10\%\}$ (left panel) and $\|-x_{ls}-x^*\|$ on  $\{m = 1000,n = 10,s = 10,\nu = 0.3,\sigma= 0.01,q = 90\%:1\%:100\%\}$ (right panel).}\label{fig:lowq}
\end{figure}

\subsection{Support recovery of  $x_{\ell_1}$ when $m<n$}
We conduct  simulations to illustrate the performance of model \eqref{subreg} PDASC  algorithm. We report how the exact support recovery probability varies with the sparsity level $s$, the noise level $\sigma$ and the probability $q$ of sign flips.
 Fig. \ref{fig:probsupp}  indicates that, as long as the sparsity level $s$ is not large, $x_{\ell_1}$ recovers the  underlying true support
with high probability     even if the measurement  contains noise and is corrupted by  sign flips. This confirms the theoretical investigations in Theorem \ref{errsub}.
\begin{figure}[htb!]
  \centering
  \begin{tabular}{cc}
   \includegraphics[trim = 0cm 0cm 0cm 0cm, clip=true,width=.4\textwidth]{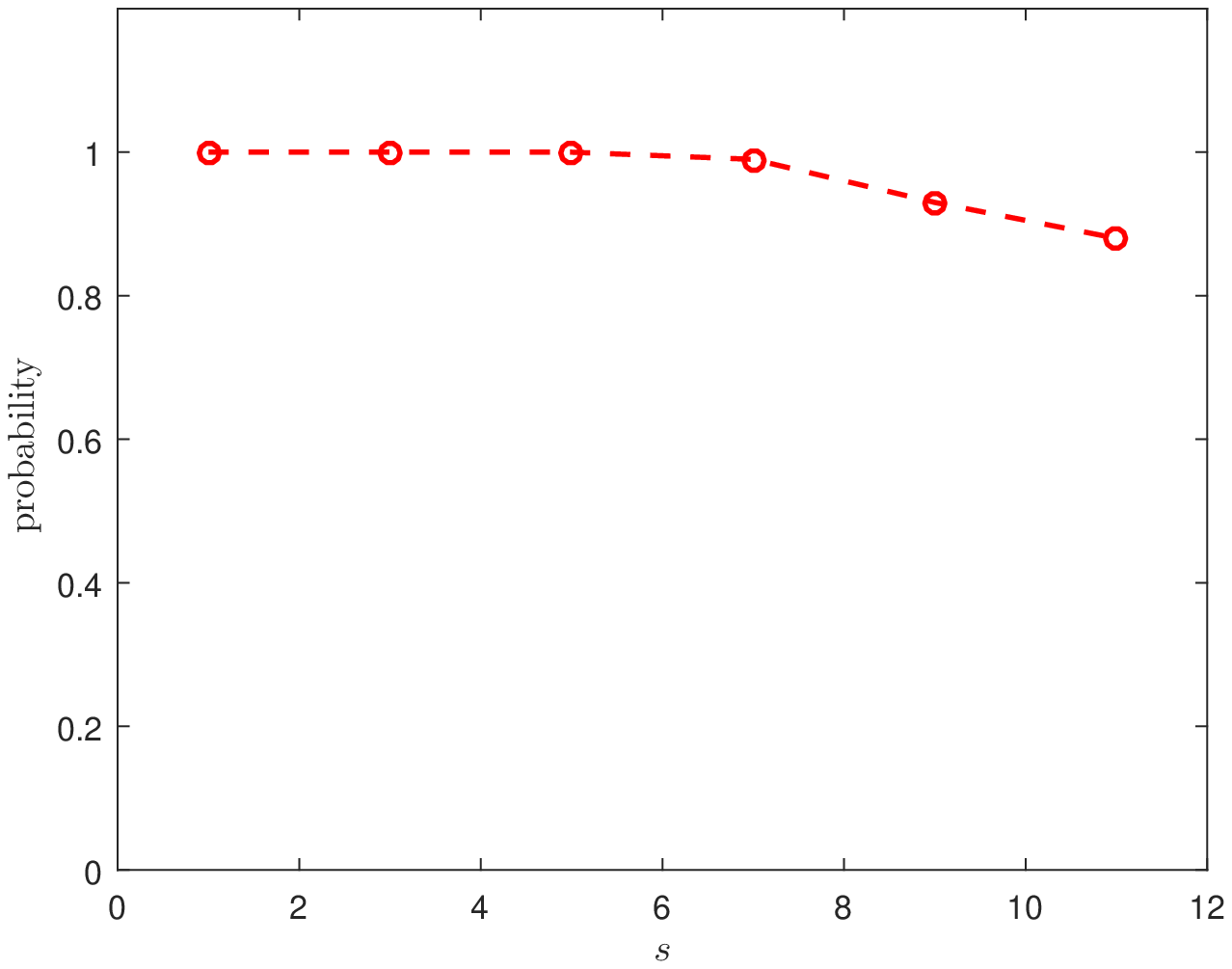} &
   \includegraphics[trim = 0cm 0cm 0cm 0cm, clip=true,width=.4\textwidth]{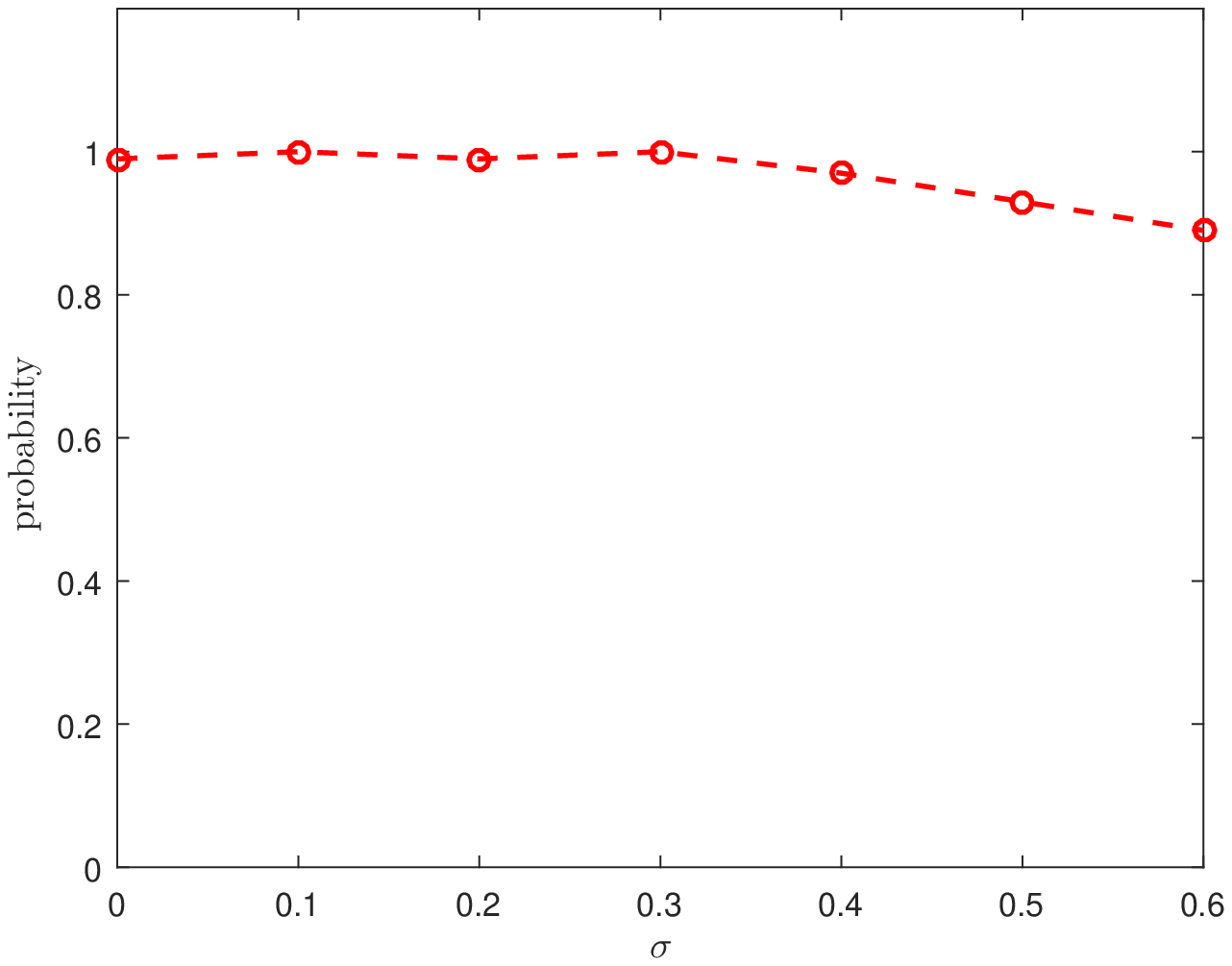} \\
   (a) $s$ & (b) $\sigma$ \\
   \includegraphics[trim = 0cm 0cm 0cm 0cm, clip=true,width=.4\textwidth]{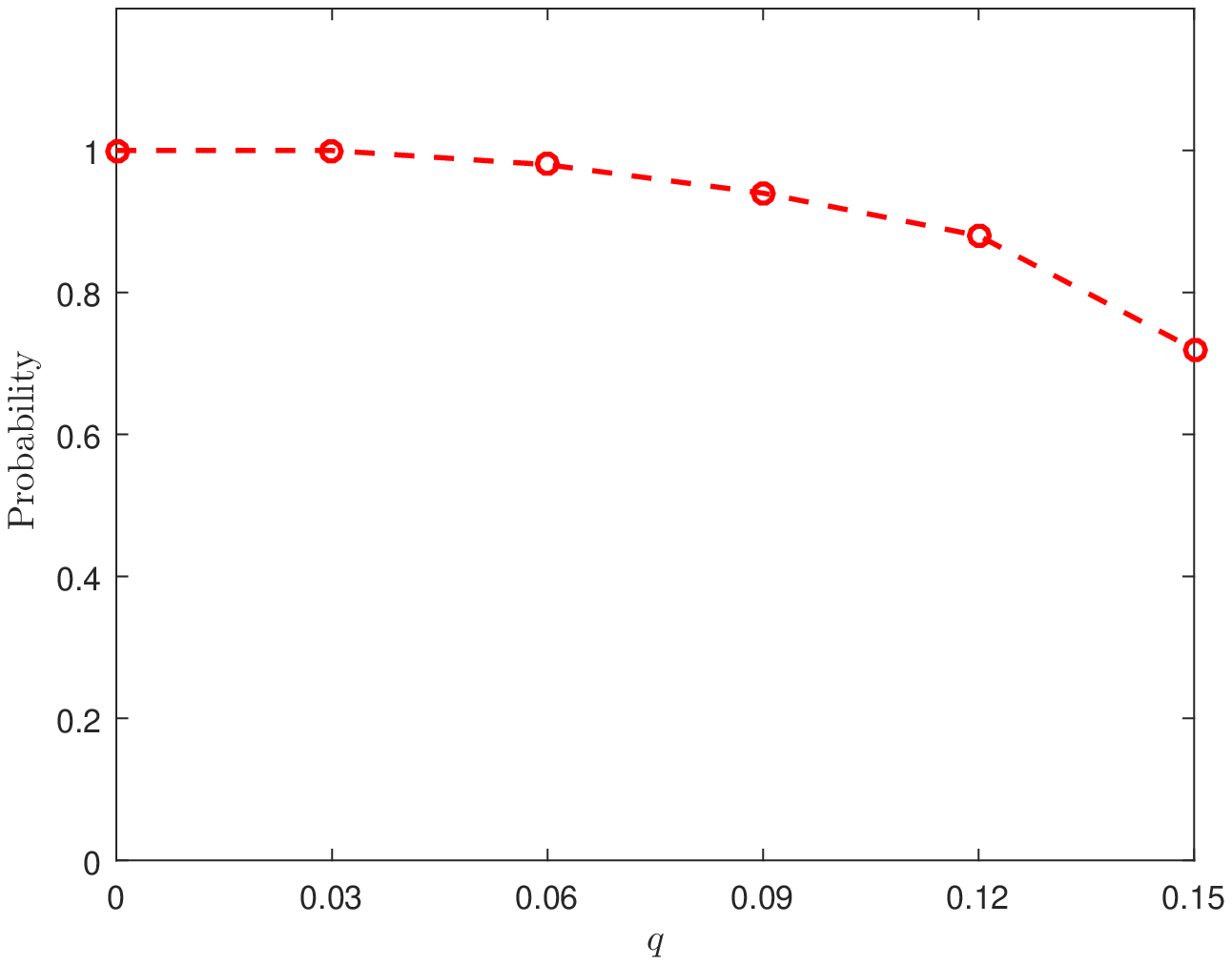} &
   \includegraphics[trim = 0cm 0cm 0cm 0cm, clip=true,width=.4\textwidth]{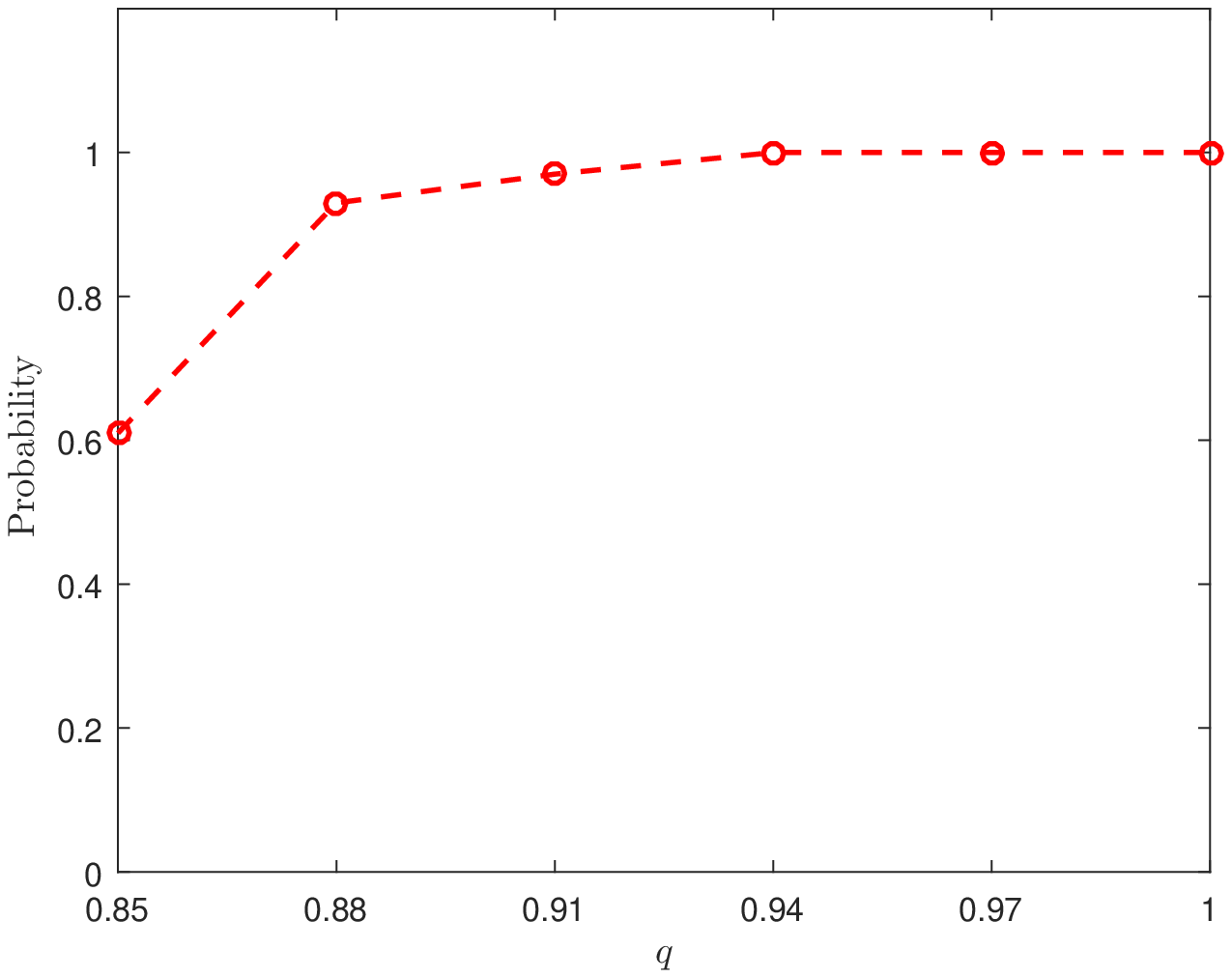}\\
     \\ (c) $q$ & (d) $q$
  \end{tabular}
   \caption{The exact support recovery probability v.s. $s$, $\sigma$ and $q$ on data set  $\{m = 500,n = 1000,s = 1:2:12,\nu = 0.1,\sigma= 0.05,q = 1\%\}$ (panel (a)), $\{m = 500,n = 1000,s = 5,\nu = 0.3,\sigma= 0:0.1:0.6, q = 5\%\}$ (panel (b)) and $\{m = 500,n = 1000,s = 5,\nu = 0.1,\sigma= 0.01,q = 0:3\%:15\%\}$ (panel (c)), $\{m = 500,n = 1000,s = 5,\nu = 0.1,\sigma= 0.01,q = 85\%:3\%:100\%\}$  (panel (d)). }\label{fig:probsupp}
\end{figure}

\subsection{Comparison with other state-of-the-art}
Now we compare our proposed model \eqref{subreg} and  PDASC algorithm with several state-of-the-art methods such as BIHT  \cite{JacquesDegraux:2013} (\url{http://perso.uclouvain.be/laurent.jacques/index.php/Main/BIHTDemo}), AOP \cite{YanYangOsher:2012} and PBAOP \cite{HuangShiYan:2015} (both AOP and PBAOP  available  at  \url{http://www.esat.kuleuven.be/stadius/ADB/huang/downloads/1bitCSLab.zip}) and  linear projection (LP) \cite{Vershynin:2015,PlanVershynin:2017}.   
 BIHT, AOP, LP and  PBAOP  are all required  to specify  the true sparsity level $s$. Both AOP and PBAOP also need to required  to specify the sign flips probability $q$.
  The PDASC   does not  require  to specify   the unknown parameter sparsity level $s$ or the probability of sign flips $q$.
We use $\{m = 500,n = 1000,s = 5,\nu = 0.1,\sigma= 0, q = 0\}$, $\{m = 500,n = 1000,s = 5,\nu = 0.3,\sigma= 0.3, q = 5\%\}$, $\{m = 500,n = 1000,s = 5,\nu = 0.5,\sigma= 0.5, q = 10\%\}$,          and $\{m = 800,n = 2000,s = 10,\nu = 0.1,\sigma= 0.1, q = 1\%\}$, $\{m = 800,n = 2000,s = 10,\nu = 0.2,\sigma= 0.3, q = 3\%\}$, $\{m = 800,n = 2000,s = 10,\nu = 0.3,\sigma= 0.5, q = 5\%\}$, and $\{m = 5000,n = 20000,s = 50,\nu = 0,\sigma= 0.2, q = 3\}$, $\{m = 5000,n = 20000,s = 50,\nu = 0,\sigma= 0.1, q = 1\}$, $\{m = 5000,n = 20000,s = 5,\nu = 0,\sigma= 0.3, q = 5\%\}$.
 The average CPU time in seconds (Time (s)), the  average of the  $\ell_2$  error $\|x_{\ell} - x^*\|$ ($\ell_2$-Err), and the  probability of  exactly  recovering  true support (PrE ($\%$)) are reported in Table \ref{tab:compother}.
  The  PDASC is comparatively very  fast and the most  accurate even if  the correlation $\nu$, the noise level $\sigma$ and the  probability of sign flips $q$ are large.
\begin{table}
  \caption{Comparison PDASC with state-of-the-art methods on  CPU time in seconds (Time (s)), average $\ell_2$  error $\|x_{\ell} - x^*\|$ ($\ell_2$-Err), probability on exactly  recovering of  true support (PrE ($\%$)).}
 \label{tab:compother}
  \vspace{-0.3cm}
  \begin{center}
  \scalebox{.85}{
  \begin{tabular}{cccccccccccc}
   \hline \hline
  \multicolumn{11}{c}{\quad \quad\quad$\{m = 500,n = 1000,s = 5\}$}\\
 \hline
  \multicolumn{4}{c}{\quad \quad \quad \quad (a) $\{\nu = 0.1,\sigma= 0.1, q = 1\%\}$}
  &&\multicolumn{3}{c}{(b) $\{\nu = 0.3,\sigma= 0.3, q = 5\%\}$}&&\multicolumn{3}{c}{(c) $\{\nu = 0.1,\sigma= 0.5, q = 10\%\}$}\\
  \cline{2-4}  \cline{6-8} \cline{10-12}
  Method           &Time (s)         & $\ell_2$-Err   & PrE $(\%)$&   & Time (s)  & $\ell_2$-Err    & PrE $(\%)$   &  & Time       &$\ell_2$-Err    &PrE               \\
   BIHT          &      1.42e-1     &1.89e-1         & 92  &   &     1.31e-1 &   5.73e-1 &        19  &          & 1.32e-1 &    9.39e-1   &   0\\
   AOP           &      2.72e-1     &7.29e-2         & 100  &   &     3.55e-1 &   2.11e-1 &        92  &          & 3.58e-1 &    4.22e-1   &    44 \\
   LP            &      8.70e-3     &4.19e-1         & 98 &   &      8.50e-3 &   4.22e-1 &        93  &          & 8.30e-3 &    4.81e-1   &    26\\
   PBAOP         &      1.46e-1    &9.08e-2         & 100 &   &      1.36e-1 &   2.05e-1 &        90  &          & 1.35e-1 &    4.53e-1   &    36\\
   PDASC         &      4.11e-2     &6.77e-2         & 100 &   &      4.38e-2 &   9.40e-2 &        99  &          & 4.56e-2 &    2.21e-1   &    71\\
  \hline
  \hline
  \multicolumn{11}{c}{\quad \quad\quad$\{m = 800,n = 2000,s = 10\}$}\\
 \hline
  \multicolumn{4}{c}{\quad \quad \quad \quad (a) $\{\nu = 0.1,\sigma= 0.1, q = 1\%\}$}
  &&\multicolumn{3}{c}{(b) $\{\nu = 0.3,\sigma= 0.2, q = 3\%\}$}&&\multicolumn{3}{c}{(c) $\{\nu = 0.5,\sigma= 0.3, q = 5\%\}$}\\
  \cline{2-4}  \cline{6-8} \cline{10-12}
  Method           &Time (s)         & $\ell_2$-Err   & PrE $(\%)$&   & Time (s)  & $\ell_2$-Err    & PrE $(\%)$   &  & Time      &$\ell_2$-Err    &PrE               \\
  BIHT          &     4.17e-1     &2.10-1       &  84  &   &           4.25e-1 &        4.21e-1 &            25 &          &    4.35e-1 &    6.46e-1&    0 \\
   AOP           &     1.09e-0     &7.78-2         &  100  &   &         1.10e-0 &       1.76e-1 &            95 &          &    1.16e-0 &    2.86e-1&    59\\
   LP            &     1.95e-2     &4.54-1         &  85 &   &           1.99e-2 &       4.49e-1 &            71 &          &    2.05e-2 &    5.03e-1&    16\\
   PBAOP         &     4.22e-1     &1.00-1         &  100 &   &          4.27e-1 &       1.58e-1 &            99 &          &    4.31e-1 &    2.99e-1&    51\\
   PDASC         &     1.23e-1     &8.66-2         &  100 &   &          1.27e-1 &       1.04e-1 &            98 &          &    1.30e-2 &    1.51e-1&    78\\
  \hline
  \hline
  \multicolumn{11}{c}{\quad \quad\quad$\{m = 5000,n = 20000,s = 50,\nu = 0\}$}\\
 \hline
  \multicolumn{4}{c}{\quad \quad \quad \quad (a) $\{\sigma= 0.1, q = 1\%\}$}
  &&\multicolumn{3}{c}{(b) $\{\sigma= 0.2, q = 3\%\}$}&&\multicolumn{3}{c}{(c) $\{\sigma= 0.3, q = 5\%\}$}\\
  \cline{2-4}  \cline{6-8} \cline{10-12}
  Method           &Time (s)         & $\ell_2$-Err   & PrE $(\%)$&   & Time (s)  & $\ell_2$-Err    & PrE $(\%)$   &  & Time       &$\ell_2$-Err    &PrE               \\
   BIHT          &     2.56e+1     &2.16e-1         & 58  &   &        2.58e+1&      4.54e-1 &            0  &          & 2.58e+1&     6.29e-1   &  0\\
   AOP           &     6.44e+1     &7.56e-2         & 100  &   &       6.46e+1&      1.66e-1 &           96  &          & 6.47e+1&        2.57e-1   &   16 \\
   LP            &     2.35e-1     &4.47e-1         & 38 &   &         2.30e-1&      4.46e-1 &           34  &          & 2.30e-1&        4.47e-1   &   26\\
   PBAOP         &     2.56e+1     &9.89e-2         & 100 &   &        2.58e+1&      1.66e-1 &           95  &          & 2.58e+1&        2.60e-1   &   18\\
   PDASC         &     7.09e-0     &7.97e-2         & 100 &   &        7.17e-0&     9.17e-2 &           99  &           & 7.23e-0&        1.23e-1   &   86\\
  \hline
  \hline
  \end{tabular}}
  \end{center}
\end{table}

Now we compare the PDASC with  the aforementioned competitors   to
recover a one-dimensional  signal.
The true  signal  are sparse  under
 wavelet basis ``Db1" \cite{Mallat:2008}. Thus,  the matrix $\Psi$ is of size $2500\times 8000$ and consists
of   random Gaussian  and an inverse of  one  level Harr wavelet transform.
The target coefficient  has $36$ nonzeros. We set  $\sigma= 0.5$, $q=4\%$. The recovered results are shown in
Fig. \ref{fig:1d} and Table \ref{tab:1d}. The reconstruction by the  PHDAS
is  visually more appealing than others, as shown in  Fig. \ref{fig:1d}.  This is further confirmed by the PSNR value reported in Table \ref{tab:1d}, which is   defined by
$\mathrm{PSNR}=10\cdot \log\frac{V^2}{\rm MSE}$,
where $V$ is the maximum absolute value of the true signal, and MSE is the mean
squared error of the reconstruction.

\begin{figure}[ht!]
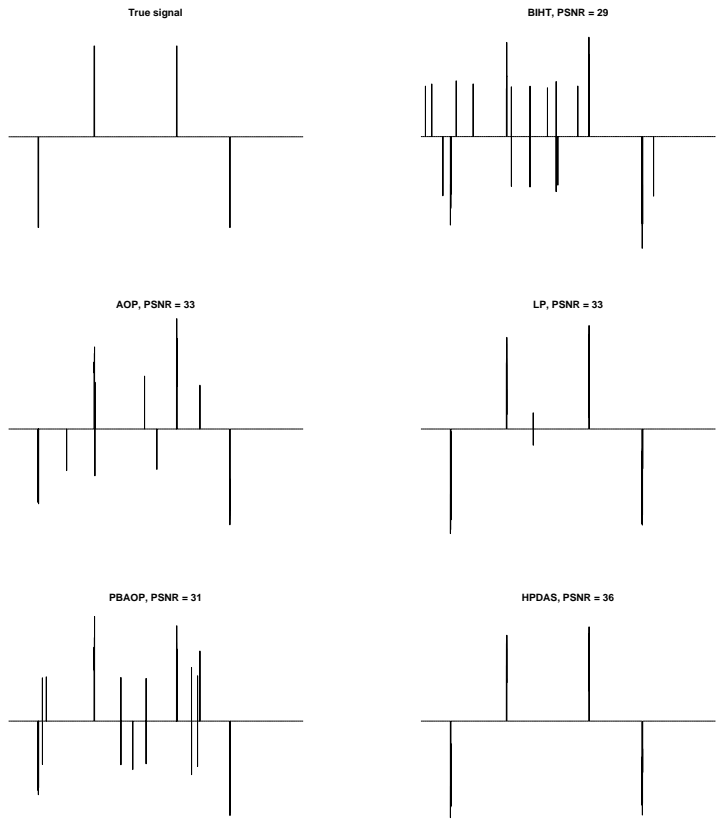

  \centering
  \begin{tabular}{cc}
    \includegraphics[trim = 0cm 0cm 0cm 0cm, clip=true,width=5cm]{{truesig}} &
    \includegraphics[trim = 0cm 0cm 0cm 0cm, clip=true,width=5cm]{{biht}}\\
    \includegraphics[trim = 0cm 0cm 0cm 0cm, clip=true,width=5cm]{{aop}} &
    \includegraphics[trim = 0cm 0cm 0cm 0cm, clip=true,width=5cm]{{linproj}}\\
    \includegraphics[trim = 0cm 0cm 0cm 0cm, clip=true,width=5cm]{{pinbalaop}} &
    \includegraphics[trim = 0cm 0cm 0cm 0cm, clip=true,width=5cm]{{hopdas}}\\
  \end{tabular}
  \caption{Reconstruction  of the one-dimension signal with  $\{m=2500, n=8000, s=36,\nu = 0, \sigma=0.5, q = 4\%\}$.}\label{fig:1d}
\end{figure}

\begin{table}[ht!]
\centering
 \caption{The CPU time in seconds  and the PSNR of one dimensional signal recovery with  $\{m=2500, n=8000, s=36,\nu = 0, \sigma=0.5, q = 4\%\}$.}\label{tab:1d}
 \begin{tabular}{ccccc}
 \hline
    method   &CPU time (s)  &PSNR      \\
 \hline
  BIHT         & 4.97  &29     \\
  AOP          & 4.98  &33     \\
  LP           & 0.11  &33     \\
  PBAOP        & 4.93  &31    \\
  PDASC        &3.26  &36     \\
  \hline
  \end{tabular}
\end{table}

\section{Conclusions}
 In this paper we consider decoding from  1-bit measurements with noise and  sign flips.     For  $m>n$, we show that, up to a constant $c$, with high probability the least squares solution $x_{\textrm{ls}}$ approximates  $ x^*$ with precision $\delta$ as long  as $m \geq\widetilde{\mathcal{O}}(\frac{n}{\delta^2})$.
For $m< n$, we assume  that the underlying target $x^*$ is $s$-sparse,
  and prove that up to a constant $c$, with high probability, the  $\ell_1$-regularized least  squares solution $x_{\ell_1}$ lies in the  ball with center $x^*$  and  radius $\delta$,  provided that  $m \geq \mathcal{O}( \frac{s\log n}{\delta^2})$.
We introduce the   one-step convergent PDAS
method to minimize  the nonsmooth objection function.
We propose a novel tuning  parameter selection rule which is  seamlessly integrated with the  continuation strategy  without any extra computational overhead. Numerical experiments are presented to illustrate salient features of the
model and the efficiency and accuracy of the  algorithm.

There are several avenues for further study. First, many practitioners observed that nonconvex sparse regularization often brings in additional benefit
in the standard CS setting. Whether  the  theoretical and computational results  derived in this paper can still be justified when nonconvex regularizers  are used deserves  further  consideration.
The 1-bit CS is a kind of nonlinear sampling approach.    Analysis of some  other  nonlinear  sampling methods are also of immense interest.

\section*{Acknowledgements}

The research of Y. Jiao is supported by National Science Foundation of  China (NSFC) No. 11501579 and
National Science Foundation of  Hubei Province No. 2016CFB486.  The research of  X. Lu is supported by NSFC Nos. 11471253 and 91630313, and
the research of L. Zhu  is supported by NSFC  No. 11731011 and Chinese Ministry of Education Project of Key  Research Institute of Humanities and Social Sciences at Universities No. 16JJD910002 and National Youth Top-notch Talent Support Program of  China.
\appendix
\section{Proof of Lemma \ref{parallel}}\label{app:parallel}
\begin{proof}
Let $u = \tilde{\psi}^t x^*$. Then $u\sim \mathcal{N}(0,1)$ due to  $\tilde{\psi} \sim \mathcal{N}(\textbf{0},\Sigma)$  and the assumption that $\|x^*\|_{\Sigma} = 1.$
\begin{align*}
   \mathbb{E}[\tilde{\psi} \tilde{y}] &= \mathbb{E}[\tilde{\psi}\tilde{ \eta} \textrm{sign}(\tilde{\psi}^t x^*+\tilde{\epsilon})]
=\mathbb{E}[\tilde{\eta}]  \mathbb{E}[\tilde{\psi} \textrm{sign}(\tilde{\psi}^t x^*+\tilde{\epsilon})] \\
 & = [q - (1-q)]\mathbb{E}[\tilde{\psi} \textrm{sign}(\tilde{\psi}^t x^*+\tilde{\epsilon})] \\
 & = (2q - 1)\mathbb{E}[ \mathbb{E}[ \tilde{\psi}\textrm{sign}(\tilde{\psi}^t x^*+\tilde{\epsilon}) | \tilde{\psi}^t x^*]]\\
 & = (2q - 1)\mathbb{E}[ \mathbb{E}[ \tilde{\psi}| \tilde{\psi}^t x^*] \textrm{sign}(\tilde{\psi}^t x^*+\tilde{\epsilon}) ]\\
 & = (2q - 1) \mathbb{E}[\mathbb{E}[\tilde{\psi} | \tilde{\psi}^t x^*] \mathbb{E}[\textrm{sign}(\tilde{\psi}^t x^*+\tilde{\epsilon}) | \tilde{\psi}^t x^*]]\\
  & = (2q - 1) \mathbb{E}[\Sigma x^*\tilde{\psi}^t x^* \mathbb{E}[\textrm{sign}(\tilde{\psi}^t x^*+\tilde{\epsilon}) | \tilde{\psi}^t x^*]]\\
 & = (2q - 1) \mathbb{E}[\textrm{sign}(\tilde{\psi}^t x^*+\tilde{\epsilon})\tilde{\psi}^t x^*]\Sigma x^*\\
 & = c \Sigma x^*,
\end{align*}
where $c = (2q - 1) \mathbb{E}[\textrm{sign}(\tilde{\psi}^t x^*+\tilde{\epsilon})\tilde{\psi} x^*]$. The second line follows from independence assumption and the third   from law of  total expectation, and the
fourth  and fifth lines are due to
the independence between  $\tilde{\epsilon}$ and $u$,
 and the sixth  line uses   the projection interpretation of conditional expectation i.e., $ \mathbb{E}[\tilde{\psi}|\tilde{\psi}^t x^*] = \frac{\mathbb{E}[\tilde{\psi} \tilde{\psi}^t x^*]-\mathbb{E}[\tilde{\psi}^t x^*]\mathbb{E}[\tilde{\psi}]}{\mathbb{E}[(\tilde{\psi}^t x^* -\mathbb{E}[\tilde{\psi}^t x^*])^2]} (\tilde{\psi}^t x^*-\mathbb{E}[\tilde{\psi}^t x^*]) + \mathbb{E}[\tilde{\psi}]  = \Sigma x^*\tilde{\psi}^t x^*$, where we use $\mathbb{E}[\tilde{\psi}]= \textbf{0}$ and $u\sim \mathcal{N}(0,1)$. Let $f(t) = \frac{1}{\sqrt{2\pi}\sigma} \exp{\frac{-t^2}{2\sigma^2}}$ be  the density function of $\tilde{\epsilon} \sim \mathcal{N}(0,\sigma^2)$.
Integrating  by parts  shows that
\begin{align*}
 c = (2q - 1) \mathbb{E}[\textrm{sign}(u+\tilde{\epsilon})u] &= (2q - 1)\mathbb{E}[(1-2 \mathbb{P}[\tilde{\epsilon}\leq -u])u] \\
 &= (2q - 1)\mathbb{E}[\frac{\partial(1-2 \mathbb{P}[\tilde{\epsilon}\leq -u)}{\partial u}] \\
 & = (2q - 1)\mathbb{E}[f(-u)] = (2q - 1)\sqrt{\frac{2}{\pi(\sigma^2+1)}}.
 \end{align*}
The proof is completed by inverting $c\Sigma$.
\end{proof}

\section{Preliminaries}
We 
%
recall some simple properties of subgaussian and subexponential random variables.

\begin{lemma}\label{basic}( Lemma 2.7.7 of \cite{Vershynin:2016} and  Remark 5.18 of  \cite{Vershynin:2010}.)
Let $\xi_1$ and $\xi_2$ be subgaussian random variables. Then both $\xi_1\xi_2$ and $ \xi_1\xi_2- \mathbb{E}[\xi_1\xi_2]$ are subexponential random variables.
\end{lemma}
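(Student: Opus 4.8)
The plan is to pass to the equivalent Orlicz-norm characterizations of the subgaussian and subexponential classes and reduce the claim to a single application of the Cauchy--Schwarz inequality. Recall that $\xi$ is subgaussian precisely when $\|\xi\|_{\psi_2} := \sup_{p\ge 1} p^{-1/2}(\mathbb{E}|\xi|^p)^{1/p} < \infty$, and $\xi$ is subexponential precisely when $\|\xi\|_{\psi_1} := \sup_{p\ge 1} p^{-1}(\mathbb{E}|\xi|^p)^{1/p} < \infty$; these are the standard moment descriptions underlying the tail and MGF definitions in the references cited in the statement. It therefore suffices to bound $\|\xi_1\xi_2\|_{\psi_1}$ in terms of $\|\xi_1\|_{\psi_2}$ and $\|\xi_2\|_{\psi_2}$.

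First I would fix $p\ge 1$ and apply the Cauchy--Schwarz inequality to the product moment,
\[
\mathbb{E}|\xi_1\xi_2|^p \le \big(\mathbb{E}|\xi_1|^{2p}\big)^{1/2}\big(\mathbb{E}|\xi_2|^{2p}\big)^{1/2}.
\]
Taking $p$-th roots and invoking the definition of $\|\cdot\|_{\psi_2}$ at exponent $2p$ gives $(\mathbb{E}|\xi_i|^{2p})^{1/(2p)} \le \|\xi_i\|_{\psi_2}\sqrt{2p}$ for $i=1,2$, so that
\[
(\mathbb{E}|\xi_1\xi_2|^p)^{1/p} \le \|\xi_1\|_{\psi_2}\|\xi_2\|_{\psi_2}\,(2p).
\]
Dividing by $p$ and taking the supremum over $p\ge 1$ yields $\|\xi_1\xi_2\|_{\psi_1}\le 2\|\xi_1\|_{\psi_2}\|\xi_2\|_{\psi_2} < \infty$, which is exactly the assertion that $\xi_1\xi_2$ is subexponential.

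For the centered variable I would use that $\|\cdot\|_{\psi_1}$ is a genuine norm, together with the fact that constants are trivially subexponential. Writing $a = \mathbb{E}[\xi_1\xi_2]$, which is finite since $|a|\le \mathbb{E}|\xi_1\xi_2| \le \|\xi_1\xi_2\|_{\psi_1}$, the triangle inequality gives $\|\xi_1\xi_2 - a\|_{\psi_1} \le \|\xi_1\xi_2\|_{\psi_1} + \|a\|_{\psi_1}$, where $\|a\|_{\psi_1} = |a|/\log 2 < \infty$ (equivalently one may cite the standard centering lemma $\|\xi - \mathbb{E}\xi\|_{\psi_1}\le C\|\xi\|_{\psi_1}$). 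Hence $\xi_1\xi_2 - \mathbb{E}[\xi_1\xi_2]$ is subexponential as well.

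I do not expect a genuine obstacle here: the only care needed is bookkeeping with the numerical constants in the moment characterizations and confirming the constant-function computation $\|a\|_{\psi_1}=|a|/\log 2$. If one preferred to avoid the Orlicz-norm machinery altogether, an alternative route is the elementary inequality $|\xi_1\xi_2|\le \tfrac12(\xi_1^2+\xi_2^2)$ combined with the facts that the square of a subgaussian is subexponential and that sums of subexponentials are subexponential; but the Cauchy--Schwarz moment argument is cleaner and delivers the sharp product bound directly.
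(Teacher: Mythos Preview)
Your argument is correct and is in fact the standard proof: the paper does not supply its own argument for this lemma but simply cites Vershynin, and the Cauchy--Schwarz moment bound you give is exactly the proof of Lemma~2.7.7 there, while the centering step is Remark~5.18 (the centering lemma). One small bookkeeping remark: you defined $\|\cdot\|_{\psi_1}$ via the moment supremum $\sup_{p\ge 1} p^{-1}(\mathbb{E}|\xi|^p)^{1/p}$, under which a constant $a$ has norm $|a|$ (attained at $p=1$), not $|a|/\log 2$; the latter comes from the Orlicz-function definition $\inf\{t>0:\mathbb{E}\exp(|\xi|/t)\le 2\}$. Since the two definitions are equivalent up to absolute constants this does not affect the conclusion, and your parenthetical appeal to the centering lemma $\|\xi-\mathbb{E}\xi\|_{\psi_1}\le C\|\xi\|_{\psi_1}$ sidesteps the issue entirely.
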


Lemma B.2 states the   nonasymptotic bound on the spectrums of $\Psi$ and the operator norm of $\Psi^t\Psi/m - \Sigma$ when $m \geq \mathcal{O}(n).$
\begin{lemma}\label{vershynin5.3940}
Let $\Psi\in \mathcal{R}^{m\times n}$  whose rows $\psi_i^t$ are independent subgaussian  vectors  in $\mathcal{R}^n$ with mean $\textbf{0}$ and covariance matrix $\Sigma$. Let $m > n$. Then for every $t>0$ with
probability at least $1-2\exp{(-C_1 t^2)}$, one has
\begin{equation}\label{norm1}
 (1- \tau)\sqrt{\gamma_{min}(\Sigma)} \leq \sqrt{\gamma_{\textrm{min}}(\frac{\Psi^t\Psi}{m})}\leq \sqrt{\gamma_{\textrm{max}}(\frac{\Psi^t\Psi}{m})} \leq (1 +\tau)\sqrt{\gamma_{max}(\Sigma)},
\end{equation}
and
\begin{equation}\label{norm2}
 \|\Psi^t\Psi/m - \Sigma\|\leq \max\{\tau,\tau^2\} \gamma_{\textrm{max}}(\Sigma),
 \end{equation}
 where $\tau = C_2 \sqrt{\frac{n}{m}} + \frac{t}{\sqrt{m}},$
 and  $C_1, C_2$ are generic positive constants depending   on  the
 maximum subgaussian norm of rows of $\Psi$.
\end{lemma}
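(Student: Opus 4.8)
The plan is to reduce the anisotropic problem to the isotropic one and then transfer the resulting bounds back through $\Sigma^{1/2}$. First I would whiten the rows: writing $\psi_i = \Sigma^{1/2}\phi_i$, the matrix $\Phi = [\phi_1^t;\ldots;\phi_m^t]$ has independent, mean-zero, isotropic subgaussian rows (covariance $\textbf{I}_n$), and $\Psi = \Phi\Sigma^{1/2}$. Consequently $\Psi^t\Psi/m - \Sigma = \Sigma^{1/2}(\Phi^t\Phi/m - \textbf{I}_n)\Sigma^{1/2}$, and for any $x$, setting $z = \Sigma^{1/2}x$ gives the identity $x^t(\Psi^t\Psi/m)x = z^t(\Phi^t\Phi/m)z$ together with $\gamma_{\min}(\Sigma)\|x\|^2 \le \|z\|^2 \le \gamma_{\max}(\Sigma)\|x\|^2$. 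Both assertions \eqref{norm1} and \eqref{norm2} will follow once I control $\|\Phi^t\Phi/m - \textbf{I}_n\|$ for the isotropic matrix $\Phi$.

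The heart of the argument is the concentration estimate $\|\Phi^t\Phi/m - \textbf{I}_n\| \le \max\{\tau,\tau^2\}$ with $\tau = C_2\sqrt{n/m} + t/\sqrt{m}$, which I would establish by a net-plus-Bernstein argument. Since the matrix is symmetric, I fix a $1/4$-net $\mathcal{N}$ of the sphere $S^{n-1}$ with $|\mathcal{N}|\le 9^n$, so that $\|\Phi^t\Phi/m - \textbf{I}_n\| \le 2\max_{x\in\mathcal{N}}|x^t(\Phi^t\Phi/m - \textbf{I}_n)x|$. For a fixed unit vector $x$, isotropy gives $\mathbb{E}\langle\phi_i,x\rangle^2 = 1$, and by Lemma \ref{basic} each $\langle\phi_i,x\rangle^2 - 1$ is a centered subexponential random variable whose $\psi_1$-norm is controlled by the maximal subgaussian norm of the rows. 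Bernstein's inequality for independent subexponentials then bounds $\mathbb{P}[\,|\frac{1}{m}\sum_{i=1}^m(\langle\phi_i,x\rangle^2 - 1)| \ge \epsilon\,]$ by $2\exp(-c\,m\min\{\epsilon^2,\epsilon\})$, and a union bound over $\mathcal{N}$ inflates this by the factor $9^n$.

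Next I would choose $\epsilon = \tfrac12\max\{\tau,\tau^2\}$. In both regimes $\tau\le 1$ and $\tau>1$ this makes $m\min\{\epsilon^2,\epsilon\}$ of order $(C_2\sqrt{n}+t)^2 \ge C_2^2 n + t^2$, so that taking the absolute constant $C_2$ large enough absorbs the entropy term $n\log 9$ and leaves the probability $1 - 2\exp(-C_1 t^2)$. On the resulting event the eigenvalues $s_i^2$ of $\Phi^t\Phi/m$ satisfy $|s_i^2 - 1| \le \max\{\tau,\tau^2\}$; an elementary case check then shows this forces $1 - \tau \le s_i \le 1 + \tau$, hence $(1-\tau)^2 \le z^t(\Phi^t\Phi/m)z/\|z\|^2 \le (1+\tau)^2$. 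Combining with the quadratic-form identity of the first paragraph yields \eqref{norm1}, while $\|\Psi^t\Psi/m - \Sigma\| \le \gamma_{\max}(\Sigma)\|\Phi^t\Phi/m - \textbf{I}_n\| \le \max\{\tau,\tau^2\}\gamma_{\max}(\Sigma)$ yields \eqref{norm2}.

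The hard part will be the bookkeeping around the $\max\{\tau,\tau^2\}$ form rather than any single deep estimate. The quantity $\min\{\epsilon^2,\epsilon\}$ in Bernstein's bound changes behaviour at $\epsilon = 1$, and the deviation level $\tau$ must be calibrated so that $m\min\{\epsilon^2,\epsilon\}$ scales like $\tau^2 m$ in \emph{both} the small- and large-deviation regimes — this is precisely what the $\max\{\tau,\tau^2\}$ choice guarantees, and verifying it carefully in each case is the most delicate point. A secondary subtlety is ensuring that the subexponential norm feeding Bernstein's inequality, and hence the constants $C_1$ and $C_2$, depends only on the maximal subgaussian norm of the rows and not on $m$ or $n$, which is what allows the statement to hold uniformly. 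This lemma is essentially Theorem 5.39 and Remark 5.40 of \cite{Vershynin:2010}, so one may alternatively invoke those directly.
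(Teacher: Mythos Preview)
Your proposal is correct and follows essentially the same approach as the paper: whiten via $\Phi=\Psi\Sigma^{-1/2}$ to reduce to the isotropic case, and then transfer the bounds back through $\Sigma^{1/2}$. The only difference is one of exposition: the paper simply cites Theorem~5.39, Lemma~5.36, and Remark~5.40 of \cite{Vershynin:2010} for the isotropic bound, whereas you spell out the underlying net-plus-Bernstein argument (which is precisely how those results are proved there), and you note this equivalence yourself in your closing sentence.
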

\begin{proof}
Let $\Phi = \Psi \Sigma^{-\frac{1}{2}}$. Then the rows of $\Phi$ are independent sub-gaussian isotropic vectors.   \eqref{norm1} follows from Theorem 5.39 and Lemma 5.36 of \cite{Vershynin:2010} and
\eqref{norm2} is a direct consequence of  Remark 5.40 of \cite{Vershynin:2010}.

\end{proof}

We  state the Bernstein-type  inequality for the sum of independent and mean $0$ sub-exponential random   random variables.

\begin{lemma}\label{vershynin5.17}(Corollary  5.17 of \cite{Vershynin:2010})
Let $\xi_1,...,\xi_m$ be independent centered sub-exponential random variables.   Then for every $t>0$  one has $$\mathbb{P}[|\sum_{i = 1}^m \xi_i|/m \geq t] \leq  2 \exp (- \min\{C_1t^2,C_2t\}m)$$ where $C_1$ and $C_2$ are are generic positive constants depending   on  the maximum
 subexponential  norm of  of $\xi_i$.
\end{lemma}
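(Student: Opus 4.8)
The plan is to use the exponential moment (Chernoff--Bernstein) method. First I would record the basic moment generating function bound for a single centered sub-exponential variable: if $\xi$ has sub-exponential norm $K:=\|\xi\|_{\psi_1}$ and $\mathbb{E}[\xi]=0$, then there is an absolute constant $c>0$ such that
\[
\mathbb{E}[\exp(s\xi)] \leq \exp\big(C K^2 s^2\big)\qquad\text{for all } |s|\leq c/K.
\]
This follows from the equivalent characterizations of sub-exponentiality, namely the moment control $\mathbb{E}|\xi|^p \leq (C K)^p\,p!$, by expanding $\exp(s\xi)$ in a power series, using $\mathbb{E}[\xi]=0$ to annihilate the linear term, and summing the resulting geometric-type series, which converges precisely in the range $|s|\leq c/K$.

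Next I would apply Markov's inequality to the exponential of the sum. For any $s>0$, independence factorizes the moment generating function, so that with $K$ now denoting the maximum sub-exponential norm among the $\xi_i$,
\[
\mathbb{P}\Big[\sum_{i=1}^m \xi_i \geq mt\Big] \leq e^{-smt}\prod_{i=1}^m \mathbb{E}[e^{s\xi_i}] \leq \exp\big(-smt + C K^2 m s^2\big),
\]
valid whenever $0<s\leq c/K$. I would then optimize the exponent $-smt+CK^2ms^2$ over the admissible range, which naturally splits into two regimes. The unconstrained minimizer is $s^\ast = t/(2CK^2)$; when $t$ is small this lies inside $(0,c/K]$ and yields the exponent $-mt^2/(4CK^2)$, i.e. the Gaussian tail $\exp(-C_1 t^2 m)$. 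When $t$ is large the optimum is pushed to the boundary $s=c/K$, and evaluating the exponent there gives a bound linear in $t$, i.e. $\exp(-C_2 t m)$. Combining the two cases produces the single upper-tail bound $\exp(-\min\{C_1 t^2, C_2 t\}m)$. Finally I would run the identical argument on $-\xi_i$, which is again centered sub-exponential with the same norm, and add the two one-sided estimates to obtain the two-sided bound with the leading factor $2$.

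The main obstacle is the first step: establishing the quantitative moment generating function estimate with the explicit dependence on $K$ and, crucially, the correct radius of validity $|s|\leq c/K$, since it is this finite radius that forces the two-regime behavior and hence the $\min\{C_1t^2,C_2t\}$ structure. Everything after that is a mechanical Chernoff optimization. I would also be careful to track the normalization $\sum_{i}\xi_i \geq mt$ (rather than $\sqrt m\,t$) throughout, as it is exactly this scaling that makes the final exponent proportional to $m$ as claimed.
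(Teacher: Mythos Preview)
Your proposal is correct and is precisely the standard Chernoff--Bernstein argument. Note, however, that the paper does not supply its own proof of this lemma at all: it is quoted verbatim as Corollary~5.17 of Vershynin's lecture notes and used as a black box. The argument you outline is essentially the one Vershynin gives in that reference, so there is nothing to compare on the paper's side.
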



\begin{lemma}\label{usedaf}
Let $\Psi\in \mathcal{R}^{m\times n}$  whose rows $\psi_i$ are independent subgaussian  vectors  in $\mathcal{R}^{n\times 1}$ with mean $\textbf{0}$ and covariance matrix $\Sigma$.  Then,
with probability at least $1-2\exp{(-C_1 C_2^2n)}$,
\begin{equation}\label{l2noise1}
\|\Psi^t\Psi/m  -  \mathbb{E}[\Psi^t\Psi/m]\| \leq 2 C_2 \gamma_{\textrm{max}}(\Sigma) \sqrt{\frac{n}{m}},
\end{equation}
as long as  $m \geq 4C^2_2 n$. Furthermore,
if $m> \frac{4C_1}{C_2^2}\log n$, then
\begin{equation}\label{linfnoise2}
\|\sum_{i=1}^m (\mathbb{E}[\psi_i y_i]-\psi_i y_i)/m\|_{\infty} \leq 2\sqrt{\frac{\log n}{C_1 m}},
\end{equation}
holds with probability at least $1-\frac{2}{n^3},$
and
\begin{equation}\label{linfnoise3}
\|\Psi^t \Psi/m - \Sigma\|_{\infty} \leq 2\sqrt{\frac{\log n}{C_1 m}},
\end{equation}
holds with probability at least $1-\frac{1}{n^2}.$
\end{lemma}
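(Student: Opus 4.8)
The plan is to treat the three bounds separately, since \eqref{l2noise1} is an operator-norm statement that follows directly from the random-matrix estimate of Lemma~\ref{vershynin5.3940}, whereas \eqref{linfnoise2} and \eqref{linfnoise3} are entrywise statements that reduce to the Bernstein inequality of Lemma~\ref{vershynin5.17} applied coordinate by coordinate, followed by a union bound.

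For \eqref{l2noise1}, first note $\mathbb{E}[\Psi^t\Psi/m]=\Sigma$ by \eqref{ub1}, so the left-hand side is exactly $\|\Psi^t\Psi/m-\Sigma\|$. I would invoke \eqref{norm2} with the specific choice $t=C_2\sqrt{n}$; then $\tau=C_2\sqrt{n/m}+t/\sqrt{m}=2C_2\sqrt{n/m}$, and the probability $1-2\exp(-C_1t^2)=1-2\exp(-C_1C_2^2n)$ matches the claimed one. The hypothesis $m\geq 4C_2^2 n$ forces $\tau\leq 1$, hence $\max\{\tau,\tau^2\}=\tau$, and \eqref{norm2} collapses to $\|\Psi^t\Psi/m-\Sigma\|\leq 2C_2\gamma_{\max}(\Sigma)\sqrt{n/m}$, which is exactly \eqref{l2noise1}.

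For \eqref{linfnoise2} and \eqref{linfnoise3} I would fix one coordinate (respectively one pair of coordinates) and reduce to a scalar sum of independent centered variables. In \eqref{linfnoise2} the $j$th coordinate is $\frac{1}{m}\sum_{i=1}^m\xi_i^{(j)}$ with $\xi_i^{(j)}=\mathbb{E}[(\psi_i)_jy_i]-(\psi_i)_jy_i$; since $y_i\in\{-1,1\}$ is bounded and hence subgaussian, and $(\psi_i)_j$ is subgaussian, Lemma~\ref{basic} shows each $\xi_i^{(j)}$ is a centered subexponential variable, and the $\xi_i^{(j)}$ are independent over $i$ because the triples driving the rows are. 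Likewise in \eqref{linfnoise3} the $(j,k)$ entry is $\frac{1}{m}\sum_{i=1}^m\zeta_i^{(jk)}$ with $\zeta_i^{(jk)}=(\psi_i)_j(\psi_i)_k-\Sigma_{jk}$, which is centered subexponential again by Lemma~\ref{basic}. Applying Lemma~\ref{vershynin5.17} with the threshold $t=2\sqrt{\log n/(C_1m)}$ gives, for each fixed coordinate or pair, deviation probability $2\exp(-\min\{C_1t^2,C_2t\}m)$. The role of the sample-size condition $m>4C_1\log n/C_2^2$ is precisely to guarantee $t\leq C_2/C_1$, so the minimum in the exponent equals $C_1t^2$; then $C_1t^2m=4\log n$ and each per-entry failure probability is at most $2/n^4$. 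A union bound over the $n$ coordinates yields \eqref{linfnoise2} with probability at least $1-2/n^3$, and a union bound over the entries of the symmetric matrix $\Psi^t\Psi/m-\Sigma$ (of which only $n(n+1)/2$ need be controlled) yields \eqref{linfnoise3} with the stated probability.

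I expect the only delicate points to be bookkeeping rather than genuine obstacles: verifying that $y_i$ is subgaussian (which rests entirely on $|y_i|=1$) so that Lemma~\ref{basic} applies to the products, checking that the prescribed $t$ lands in the regime where the Bernstein exponent is quadratic in $t$ (this is exactly where the assumption on $m$ enters), and tracking the union-bound counts so that the per-entry tail $2/n^4$ survives multiplication by $n$ and by $O(n^2)$ to give the stated probabilities. The substance of the argument is carried entirely by the preliminary Lemmas~\ref{basic}, \ref{vershynin5.3940} and \ref{vershynin5.17}, so no new estimate is required.
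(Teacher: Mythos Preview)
Your proposal is correct and follows essentially the same route as the paper: the same specialization of \eqref{norm2} with $t=C_2\sqrt{n}$ for \eqref{l2noise1}, and the same per-entry application of Lemma~\ref{vershynin5.17} at threshold $t=2\sqrt{\log n/(C_1m)}$ followed by a union bound for \eqref{linfnoise2} and \eqref{linfnoise3}. The only cosmetic difference is that for \eqref{linfnoise3} the paper union-bounds crudely over all $n^2$ entries rather than exploiting symmetry, arriving at the same $1-1/n^2$ conclusion.
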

\begin{proof}
  By \eqref{ub1}, $\mathbb{E}[\Psi^t\Psi/m] =\Sigma$, hence  \eqref{l2noise1} follows  from   \eqref{norm2} with   $t = C_2 \sqrt{n}$  and the assumption $m \geq 4C^2_2n$.
Define $ G_{i,j} := y_i (\psi_i)_j\in \mathcal{R}^{1}, i= 1,...,m, j=1,..,n,$ which is subexponential by Lemma \ref{basic}.
Therefore,
\begin{align*}
\mathbb{P} [\|\sum_{i=1}^m (\mathbb{E}[\psi_i y_i]-\psi_i y_i)/m\|_{\infty}\geq  t ] &= \mathbb{P}[\bigcup_{j= 1}^{n}\{|\sum_{i = 1}^{m} G_{i,j}/m| \geq t \}]\\
&\leq \sum_{j= 1}^{n} \mathbb{P}[|\sum_{i = 1}^{m} G^{(i)}_{k,j}|/m \geq t] \\
&\leq n \exp (- \min\{C_1t^2,C_2t\}m)\\
& \leq 2n \exp (- C_1t^2m),
\end{align*}
where the first inequality is due to the union bound, the second follows from Lemma \ref{vershynin5.17} and the last  is because of
restrictions $t\leq \frac{C_2}{C_1}$ and $m<n.$
Then \eqref{linfnoise2} follows from our assumption that $m> \frac{4C_1}{C_2^2}\log n$ by setting  $t = 2\sqrt{\frac{\log n}{C_1 m}}$ and.
Let
 $ G_{j,k}^{i} := (\psi_i)_{j}(\psi_{i})_{k} - \Sigma_{j,k}\in \mathcal{R}^{1}, i = 1,...,n, j= 1,...,n, \ell=1,..,n,$ which is mean $0$ subexponential by Lemma \ref{basic}.
Therefore,
\begin{align}
\mathbb{P}[\|\Psi^t \Psi/m - \Sigma\|_{\infty}\geq t] &= \mathbb{P}[\max_{j,k} |\sum_{i=1}^{m}G_{j,k}^{i} /m|\geq t]\nonumber\\
&= \mathbb{P}[\bigcup_{j= 1,k=1}^{n,n}\{|\sum_{i=1}^{m}G_{j,k}^{i}/m| \geq t \}]\nonumber\\
&\leq \sum_{j= 1, k=1}^{n,n} \mathbb{P}[|\sum_{i=1}^{m}G_{j,k}^{i}/m| \geq t] \nonumber\\
&\leq n^2 \exp (- \min\{C_1t^2,C_2t\}m) \nonumber\\
& \leq n^2 \exp (-  C_1t^2 m), \nonumber
\end{align}
where the first inequality is due to the union bound, and the second follows from   Lemma \ref{vershynin5.17} and the last inequality  is because of
restricting $t\leq \frac{C_2}{C_1}$.
Then by the  assumption that $m> \frac{4C_1}{C_2^2}\log n$, Lemma \ref{linfnoise3} follows  by  setting  $t = 2\sqrt{\frac{\log n}{C_1 m}}$.
\end{proof}

\section{Proof of Theorem \ref{errls}}\label{app:errls}

\begin{proof}
First we show that the sample covariance  matrix   $\Psi^t\Psi/m$ is invertible with probability at least $1- 2\exp{(-C_1 C_2^2 n)}$   as long as
$m > 4 C_2 n.$ This follows  from \eqref{norm1} in Lemma \ref{vershynin5.3940} by setting $t = C_2\sqrt{n}$.
Recall
\begin{equation}\label{xstar}
 \widetilde{x}^* = c x^*.
 \end{equation}
 Let
\begin{equation}\label{effniose}
 \Delta =  y -\Psi  \widetilde{x}^*,
 \end{equation}
 be the error in  measuring  nonlinearity, sign flips and  noise in the 1-bit CS measurement.
 Then,
\begin{align}
  \|\Psi^t \Delta/m \|_{2} & = \|\Psi^t (\Psi \widetilde{x}^* - y)/m \|_{2}\nonumber = \|\frac{\Psi^t \Psi}{m}  \widetilde{x}^* - \Psi^t y/m \|_{2} \nonumber = \|(\frac{\Psi^t \Psi}{m}  \widetilde{x}^* - \Sigma  \widetilde{x}^*) +  (\Sigma  \widetilde{x}^*-\Psi^t y/m )\|_{2}\nonumber\\
 & = \|(\frac{\Psi^t \Psi}{m}  \widetilde{x}^* -  \mathbb{E}[\frac{\Psi^t \Psi}{m}  \widetilde{x}^*]  )+ (\mathbb{E}[\Psi^t y/m]-\Psi^t y/m )\|_{2}\nonumber\\
 & \leq  |c|\|{x^*}\|_{2} \|\frac{\Psi^t \Psi}{m}  -  \mathbb{E}[\frac{\Psi^t \Psi}{m}  ]\|  + \|\sum_{i=1}^m (\mathbb{E}[\psi_i y_i]-\psi_i y_i)/m\|_{2}\nonumber\\
 &\leq |c| \frac{1}{\sqrt{\gamma_{\textrm{min}}(\Sigma)}}  \|\frac{\Psi^t \Psi}{m}  -  \mathbb{E}[\frac{\Psi^t \Psi}{m}  ]\| + \sqrt{n} \|\sum_{i=1}^m (\mathbb{E}[\psi_i y_i]-\psi_i y_i)/m\|_{\infty}, \label{noicon}
 \end{align}
 where the fourth  equality  is due to \eqref{parallel}, \eqref{ub1} and \eqref{ub2},  the first inequality   follows from the triangle inequality and the definition of $\widetilde{x}^*$, and the  last inequality uses    the assumption  $1=\|x^*\|^2_{\Sigma} \geq \gamma_{\textrm{min}}(\Sigma) \|x^*\|_{2}^2 $ and the fact that $\|\cdot\|_2\leq \sqrt{n}\|\cdot\|_{\infty} $.  Combining with \eqref{l2noise1} and $\eqref{linfnoise2}$, we deduce that,
 with probability at least $1- 2\exp{(-C_1 C_2^2 n)} -\frac{2}{n^3}$,
 \begin{equation}\label{l2}
 \|\Psi^t \Delta/m\|_{2} \leq  \sqrt{\frac{n}{m}} 2(|c|  C_2 \sqrt{\kappa(\Sigma)\gamma_{\textrm{max}}(\Sigma)}  + \sqrt{\frac{\log n}{C_1}}).
 \end{equation}
Now we  prove that $\|x_{\textrm{ls}}/c- cx^*\|_2 = \widetilde{\mathcal{O}} (\sqrt{\frac{n}{m}} /c)$ with high probability.
\begin{align*}
|c| \|x_{\textrm{ls}}/c- x^*\|_2 &= \|x_{\textrm{ls}}- \widetilde{x}^*\|_2 = \|(\Psi^t\Psi)^{-1}\Psi^t y - \widetilde{x}^*\|_2\\
&\leq \|(\Psi^t\Psi/m)^{-1}\Psi^t (\Psi \widetilde{x}^* + y - \Psi \widetilde{x}^* )/m - \widetilde{x}^*\|_2\\
& = \|(\Psi^t\Psi/m)^{-1}\|\|\Psi^t \Delta/m \|_2\\
& \leq  \sqrt{\frac{n}{m}} 2(|c| C_2 \sqrt{\kappa(\Sigma)\gamma_{\textrm{max}}(\Sigma)}  + \sqrt{\frac{\log n}{C_1}})/(1-2C_2 \sqrt{\frac{n}{m}})^2\\
&\leq  \sqrt{\frac{n}{m}} 4(|c| C_2 \sqrt{\kappa(\Sigma)\gamma_{\textrm{max}}(\Sigma)}  + \sqrt{\frac{\log n}{C_1}}),
\end{align*}
where the second inequality  follows with probability at least $1- 4\exp{(-C_1 C_2^2 n)} -\frac{2}{n^3}$ from \eqref{noicon}  and  \eqref{norm1} by setting $t = C_2\sqrt{n}$, and the last line is due to the assumption $m \geq 16 C_2^2 n.$
Hence, the proof of Theorem \ref{errls} is completed by dividing $|c|$ on both side and some algebra.
\end{proof}

\section{Proof of Theorem \ref{errsub}}\label{app:errsub}
\begin{proof}
Our proof is based on  Lemmas \ref{slev} - \ref{rel} below.
Denote $R =  x_{\ell_1} - \widetilde{x}^* $, $\mathcal{A}^* = supp(x^*)$ and $\mathcal{I}^* = \overline{\mathcal{A}^*}$.
 The first  lemma shows that $ R$ is sparse in the sense that its energy is mainly cumulated on $\mathcal{A}^*$ if $\lambda$ is chosen properly.
\begin{lemma}\label{slev}
Let
 \begin{equation}\label{cone}
 \mathcal{C}_{\mathcal{A^*}}= \{z \in \mathcal{R}^n: \|z_{\mathcal{I^*}}\|_1 \leq 3\|z_{\mathcal{A^*}}\|_1\},
 \end{equation}
 and define  $\mathcal{E} = \{ \|\Psi^t \Delta/m\|_{\infty} \leq \lambda/2 \}$. Conditioning  on the event $\mathcal{E}$, we have  $R\in  \mathcal{C}_{\mathcal{A^*}}$.
  \end{lemma}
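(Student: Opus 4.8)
The plan is to run the standard basic-inequality argument for the Lasso, using only the optimality of $x_{\ell_1}$ together with the Hölder bound supplied by the event $\mathcal{E}$. First I would invoke optimality: since $x_{\ell_1}$ minimizes $\frac{1}{2m}\|y-\Psi x\|_2^2 + \lambda\|x\|_1$, comparing its objective value with that at the target $\widetilde{x}^* = cx^*$ from \eqref{xstar} gives
\begin{equation*}
\frac{1}{2m}\|y - \Psi x_{\ell_1}\|_2^2 + \lambda\|x_{\ell_1}\|_1 \le \frac{1}{2m}\|y - \Psi \widetilde{x}^*\|_2^2 + \lambda\|\widetilde{x}^*\|_1.
\end{equation*}
Writing $y = \Psi\widetilde{x}^* + \Delta$ with $\Delta$ as in \eqref{effniose} and $R = x_{\ell_1}-\widetilde{x}^*$, the two quadratic terms become $\frac{1}{2m}\|\Delta - \Psi R\|_2^2$ and $\frac{1}{2m}\|\Delta\|_2^2$; expanding and cancelling $\|\Delta\|_2^2$ leaves
\begin{equation*}
\frac{1}{2m}\|\Psi R\|_2^2 \le \frac{1}{m}\langle \Psi^t\Delta, R\rangle + \lambda\bigl(\|\widetilde{x}^*\|_1 - \|x_{\ell_1}\|_1\bigr).
\end{equation*}

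Next I would bound the two terms on the right. For the cross term, Hölder's inequality gives $\frac{1}{m}\langle \Psi^t\Delta, R\rangle \le \|\Psi^t\Delta/m\|_\infty\,\|R\|_1$, which on the event $\mathcal{E}$ is at most $\frac{\lambda}{2}\|R\|_1$. For the $\ell_1$ term I would exploit that $\widetilde{x}^*$ is supported on $\mathcal{A}^*$, so that $\widetilde{x}^*_{\mathcal{I}^*}=\textbf{0}$; splitting $\|x_{\ell_1}\|_1 = \|\widetilde{x}^*+R\|_1$ over the blocks $\mathcal{A}^*$ and $\mathcal{I}^*$ and applying the triangle inequality on each block yields $\|x_{\ell_1}\|_1 \ge \|\widetilde{x}^*\|_1 - \|R_{\mathcal{A}^*}\|_1 + \|R_{\mathcal{I}^*}\|_1$, hence $\|\widetilde{x}^*\|_1 - \|x_{\ell_1}\|_1 \le \|R_{\mathcal{A}^*}\|_1 - \|R_{\mathcal{I}^*}\|_1$.

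Finally I would combine the two bounds, decompose $\|R\|_1 = \|R_{\mathcal{A}^*}\|_1 + \|R_{\mathcal{I}^*}\|_1$, and discard the nonnegative left-hand side $\frac{1}{2m}\|\Psi R\|_2^2\ge 0$ to reach
\begin{equation*}
0 \le \frac{3\lambda}{2}\|R_{\mathcal{A}^*}\|_1 - \frac{\lambda}{2}\|R_{\mathcal{I}^*}\|_1,
\end{equation*}
which rearranges to $\|R_{\mathcal{I}^*}\|_1 \le 3\|R_{\mathcal{A}^*}\|_1$, i.e.\ $R\in \mathcal{C}_{\mathcal{A}^*}$. The argument is essentially routine and entirely deterministic once we condition on $\mathcal{E}$; the only point demanding care is the blockwise $\ell_1$ bookkeeping, where the constant $3$ emerges from the clash between the $\frac{\lambda}{2}\|R\|_1$ slack produced by Hölder and the $\lambda(\|R_{\mathcal{A}^*}\|_1 - \|R_{\mathcal{I}^*}\|_1)$ term produced by the support decomposition. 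No further probabilistic input is needed at this stage, since the likelihood that $\mathcal{E}$ holds is controlled separately through the tail bound \eqref{linfnoise2}.
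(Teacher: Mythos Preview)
Your argument is correct and essentially identical to the paper's: both start from the optimality of $x_{\ell_1}$ at $\widetilde{x}^*$, expand the quadratic using $y=\Psi\widetilde{x}^*+\Delta$, bound the cross term via H\"older and the event $\mathcal{E}$, and use the blockwise $\ell_1$ decomposition $\|x_{\ell_1}\|_1\ge\|\widetilde{x}^*\|_1-\|R_{\mathcal{A}^*}\|_1+\|R_{\mathcal{I}^*}\|_1$ to arrive at $\|R_{\mathcal{I}^*}\|_1\le 3\|R_{\mathcal{A}^*}\|_1$. The paper merely compresses the expansion step into ``some algebra,'' but the route and the constants are the same.
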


\begin{proof}
The optimality of $x_{\ell_1}$ implies that
$\frac{1}{2m}\|y - \Psi x_{\ell_1}\|_2^2 + \lambda \|x_{\ell_1}\|_1\leq \frac{1}{2m}\|y - \Psi \widetilde{x}^*\|_2^2 + \lambda \|\widetilde{x}^*\|_1.$
Recall that  $y =  \Psi \widetilde{x}^* + \Delta$. Some algebra on the above display shows
 \begin{align*}
 &\frac{1}{2m}\|\Psi R\|_2^2 + \lambda \|R_{\mathcal{I}^*}\|_1\leq  \langle R, \Psi^t \Delta/m \rangle + \lambda \|R_{\mathcal{A}^*}\|_1\\
 &\leq \|R\|_1 \|\Psi^t \Delta/m\|_{\infty} + \lambda \|R_{\mathcal{A}^*}\|_1\leq \|R\|_1 \lambda/2 + \lambda \|R_{\mathcal{A}^*}\|_1,
 \end{align*}
 where, we use Cauchy Schwartz inequality and the definition of $\mathcal{E}$. The above inequality shows
 \begin{equation}\label{basiceq}
 \frac{1}{m}\|\Psi R\|_2^2 + \lambda \|R_{\mathcal{I}^*}\|_1 \leq 3\lambda \|R_{\mathcal{A}^*}\|_1,
\end{equation}
i.e., $R \in \mathcal{C}_{\mathcal{A^*}}$. This finishes the proof of Lemma \ref{slev}.
\end{proof}

The next  Lemma gives a lower bound on $\mathbb{P}[\mathcal{E}]$ with a proper regularization parameter $\lambda$.
  \begin{lemma}\label{noiselinf}
  Let $C_3 \geq \|x^*\|_1$.
If  $m> \frac{4C_1}{C_2^2}\log n$, taking $\lambda =  \frac{4(1+|c|C_3)}{\sqrt{C_1}}\sqrt{\frac{\log n}{m}}$, then with probability at least $1-2/n^3  -2/n^2$, one has
  \begin{equation}
\|\Psi^t \Delta/m\|_{\infty} \leq \lambda/2.
  \end{equation}
  \end{lemma}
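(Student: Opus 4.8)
The plan is to exploit the fact that $\Psi^t\Delta/m$ has population mean zero, and then bound its $\ell_\infty$ norm by a union bound over the two concentration estimates already packaged in Lemma \ref{usedaf}. Recalling $\widetilde{x}^* = c x^*$ from \eqref{xstar} and $\Delta = y - \Psi\widetilde{x}^*$ from \eqref{effniose}, I would first write $\Psi^t\Delta/m = \Psi^t y/m - (\Psi^t\Psi/m)\widetilde{x}^*$ and then center each piece. Using \eqref{ub1}, \eqref{ub2} and Lemma \ref{parallel}, which gives $\mathbb{E}[\Psi^t y/m] = \mathbb{E}[\tilde y\tilde\psi] = c\Sigma x^* = \Sigma\widetilde{x}^*$, the same as $\mathbb{E}[\Psi^t\Psi/m]\widetilde{x}^* = \Sigma\widetilde{x}^*$, the deterministic bias cancels and leaves the clean decomposition
\[
\Psi^t\Delta/m = \bigl(\Psi^t y/m - \mathbb{E}[\Psi^t y/m]\bigr) - \bigl(\Psi^t\Psi/m - \Sigma\bigr)\widetilde{x}^*.
\]

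Next I would take $\ell_\infty$ norms and apply the triangle inequality, so that $\|\Psi^t\Delta/m\|_\infty$ is bounded by $\|\Psi^t y/m - \mathbb{E}[\Psi^t y/m]\|_\infty + \|(\Psi^t\Psi/m-\Sigma)\widetilde{x}^*\|_\infty$. The first term is exactly the quantity controlled by \eqref{linfnoise2}, hence at most $2\sqrt{\log n/(C_1 m)}$ with probability at least $1-2/n^3$ (this is where the hypothesis $m>\tfrac{4C_1}{C_2^2}\log n$ is inherited). For the second term I would invoke the elementary entrywise estimate $\|M\widetilde{x}^*\|_\infty \leq \|M\|_\infty\|\widetilde{x}^*\|_1$ with $M = \Psi^t\Psi/m-\Sigma$, combined with \eqref{linfnoise3}, which bounds $\|M\|_\infty$ by $2\sqrt{\log n/(C_1 m)}$ with probability at least $1-1/n^2$, and with $\|\widetilde{x}^*\|_1 = |c|\,\|x^*\|_1 \leq |c|C_3$. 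Intersecting the two events by a union bound accounts for the stated failure probability $2/n^3+2/n^2$.

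On this event the two contributions sum to $2(1+|c|C_3)\sqrt{\log n/(C_1 m)}$, which is precisely $\lambda/2$ for the prescribed $\lambda = \tfrac{4(1+|c|C_3)}{\sqrt{C_1}}\sqrt{\log n/m}$, so the inequality closes with matching constants. I do not anticipate a genuine analytic obstacle, since the heavy lifting, namely the two sub-exponential Bernstein bounds, is already done in Lemma \ref{usedaf}. The only points requiring care are the \emph{centering step}, that is, verifying through Lemma \ref{parallel} that the population bias $\mathbb{E}[\Psi^t\Delta/m]$ vanishes exactly, and the deliberate choice to pass $\Psi^t\Psi/m-\Sigma$ through its entrywise $\ell_\infty$ norm rather than its operator norm, so that the $\ell_1$ size of $\widetilde{x}^*$ (hence the constant $C_3$) enters at the favorable $\sqrt{\log n/m}$ rate instead of an $\sqrt{n/m}$ rate.
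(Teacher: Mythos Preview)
Your proposal is correct and follows essentially the same route as the paper: the same additive decomposition $\Psi^t\Delta/m = (\Psi^t y/m - \mathbb{E}[\Psi^t y/m]) - (\Psi^t\Psi/m-\Sigma)\widetilde{x}^*$, the same two concentration inputs \eqref{linfnoise2} and \eqref{linfnoise3} from Lemma~\ref{usedaf}, and the same H\"older-type step $\|M\widetilde{x}^*\|_\infty\le\|M\|_\infty\|\widetilde{x}^*\|_1$. The only cosmetic discrepancy is that the union bound actually yields failure probability $2/n^3+1/n^2$, which the stated $2/n^3+2/n^2$ simply over-covers.
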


  \begin{proof}
  \begin{align}
  \|\Psi^t \Delta/m \|_{\infty} & = \|\Psi^t(\Psi \widetilde{x}^* - y)/m \|_{\infty} = \|(\frac{\Psi^t \Psi}{m}  \widetilde{x}^*- \Sigma  \widetilde{x}^*) +  (\Sigma   \widetilde{x}^*-\Psi^t y/m )\|_{\infty}\nonumber\\
 &\leq \|(\frac{\Psi^t \Psi}{m}  \widetilde{x}^* - \Sigma  \widetilde{x}^*)\|_{\infty} + \|(\mathbb{E}[\Psi^t y/m]-\Psi^t y/m )\|_{\infty}\nonumber\\
 & \leq  |c| \|(\frac{\Psi^t \Psi}{m}  - \Sigma) \|_{\infty}\|x^*\|_1  + \|\sum_{i=1}^m (\mathbb{E}[\psi_i y_i]-\psi_i y_i)/m\|_{\infty}\nonumber\\
 &\leq  |c|C_3 2\sqrt{\frac{\log n}{C_1m}} +  2\sqrt{\frac{\log n}{C_1m}}\nonumber\\
 &=  \frac{2(1+|c|C_3)}{\sqrt{C_1}}\sqrt{\frac{\log n}{m}},\nonumber
 \end{align}
 where the    first inequality  is due to the triangle inequality,  \eqref{parallel}, \eqref{ub1} and \eqref{ub2}, the second inequality  follows from the definition of $\widetilde{x}^*$ and Cauchy-Schwartz inequality,  and the third one uses \eqref{linfnoise2} and \eqref{linfnoise3}. The proof of Lemma \ref{noiselinf} is completed by setting $\lambda = \frac{4(1+|c|C_3)}{\sqrt{C_1}}\sqrt{\frac{\log n}{m}}$.
  \end{proof}

The last Lemma shows $\Psi$ is strongly convex along the direction contained  in the cone $\mathcal{C}_{\mathcal{A^*}}$ defined in \eqref{cone}.

\begin{lemma}\label{rel}
If $s \leq \exp^{(1-\frac{C_1}{2})} n$ and ¡¡
 $m \geq \frac{64(4\kappa(\Sigma)+1)^2}{C_1}s\log\frac{en}{s},$
then with probability at least  $1-1/n^{2}$,
we have ¡¡
 $$\|\Psi z \|^2_2/m \geq \frac{\gamma_{min}(\Sigma)}{68(4\kappa(\Sigma)+1)^2} \|z\|_2^2,\quad \forall z\in \mathcal{C}_{\mathcal{A^*}}.$$
  \end{lemma}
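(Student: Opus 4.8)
The statement is a restricted eigenvalue (RE) inequality, so the plan is to show that the empirical quadratic form $z^t(\Psi^t\Psi/m)z$ stays bounded below by a fixed multiple of $\gamma_{\min}(\Sigma)\|z\|_2^2$ uniformly over the cone $\mathcal{C}_{\mathcal{A^*}}$ from \eqref{cone}. The natural starting point is the decomposition $z^t(\Psi^t\Psi/m)z = z^t\Sigma z + z^t(\Psi^t\Psi/m-\Sigma)z \ge \gamma_{\min}(\Sigma)\|z\|_2^2 - |z^t(\Psi^t\Psi/m-\Sigma)z|$, which reduces the claim to a uniform upper bound on the centered quadratic form over $\mathcal{C}_{\mathcal{A^*}}\cap\{\|z\|_2=1\}$. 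Throughout I would exploit the defining cone geometry: since $\|z_{\mathcal I^*}\|_1\le 3\|z_{\mathcal A^*}\|_1$, one has $\|z\|_1\le 4\|z_{\mathcal A^*}\|_1\le 4\sqrt s\,\|z\|_2$, so cone vectors are effectively $s$-sparse in the $\ell_1/\ell_2$ sense.

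The main tool would be a sparse spectral concentration bound obtained by feeding submatrices into Lemma \ref{vershynin5.3940}. Fixing a block size $k$ proportional to $(4\kappa(\Sigma)+1)s$, I would apply \eqref{norm2} to each $\Psi_T$ with $|T|\le k$ and take a union bound over the $\binom{n}{k}\le(en/k)^k$ supports. The per-support failure probability $2\exp(-C_1t^2)$ is beaten once $t^2\asymp k\log(en/k)\asymp (4\kappa(\Sigma)+1)s\log(en/s)$; this is precisely where the threshold $m\ge \frac{64(4\kappa(\Sigma)+1)^2}{C_1}s\log\frac{en}{s}$ makes the resulting $\tau\asymp\sqrt{k\log(en/k)/m}$ a small constant, while the technical condition $s\le e^{1-C_1/2}n$ is what converts the union-bound tail into the claimed $1-1/n^2$ probability. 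On the good event, \eqref{norm1}--\eqref{norm2} give a two-sided sparse restricted isometry: $\|\Psi u\|_2^2/m$ lies between a fixed fraction of $\gamma_{\min}(\Sigma)\|u\|_2^2$ and a fixed multiple of $\gamma_{\max}(\Sigma)\|u\|_2^2$ for every $u$ supported on a set of size at most $k$.

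With sparse isometry in hand, I would extend it to the cone by a shelling argument. Ordering the coordinates of $z_{\mathcal I^*}$ by decreasing magnitude and partitioning $\mathcal I^*$ into successive blocks $T_2,T_3,\dots$ of size $k$, the standard tail estimate $\|z_{T_{j+1}}\|_2\le \|z_{T_j}\|_1/\sqrt k$ together with the cone constraint yields $\sum_{j\ge2}\|z_{T_j}\|_2\le (3/\sqrt{k/s})\,\|z_{\mathcal A^*}\|_2$. Writing $T_0=\mathcal A^*\cup T_1$ and using $\|\Psi z\|_2\ge \|\Psi z_{T_0}\|_2-\sum_{j\ge2}\|\Psi z_{T_j}\|_2$, I would bound the first term below by the lower sparse-isometry constant ($\asymp\sqrt{\gamma_{\min}(\Sigma)}$) and each tail term above by the upper one ($\asymp\sqrt{\gamma_{\max}(\Sigma)}$). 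Because the block ratio $k/s$ is proportional to $4\kappa(\Sigma)+1$, the factor $\sqrt{\gamma_{\max}(\Sigma)/(k/s)}$ is dominated by $\sqrt{\gamma_{\min}(\Sigma)}$, so the tail is absorbed and, after tracking the numerical constants, $\|\Psi z\|_2^2/m\ge \frac{\gamma_{\min}(\Sigma)}{68(4\kappa(\Sigma)+1)^2}\|z\|_2^2$.

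The step I expect to be the main obstacle is exactly the reconciliation forced by a non-identity $\Sigma$. For $\Sigma=\textbf{I}_n$ the upper and lower sparse-isometry constants coincide up to $1\pm\tau$ and the shelling closes immediately with universal constants; for a general covariance the off-support and cross contributions are controlled only through $\gamma_{\max}(\Sigma)$, while the useful lower bound is proportional to $\gamma_{\min}(\Sigma)$, and a naive blocking at scale $s$ fails because $\gamma_{\max}(\Sigma)\gg\gamma_{\min}(\Sigma)$. Overcoming this requires inflating the block size, equivalently controlling sparse eigenvalues over supports of size a multiple of $(4\kappa(\Sigma)+1)s$, so that the condition number is beaten by the $1/\sqrt{k/s}$ gain. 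This is the mechanism that injects $\kappa(\Sigma)$ into both the restricted-eigenvalue constant and the required number of measurements, and carrying the numerical constants through this balancing cleanly is the delicate part of the argument.
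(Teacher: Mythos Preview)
Your proposal is correct and would establish the lemma, but it follows a genuinely different path from the paper. The paper keeps the shelling block size equal to $s$: it sorts the coordinates of $z$, sets $\mathcal A$ to be the top-$s$ indices, shells the rest into blocks $\mathcal I_t$ of size $s$, and then lower-bounds $\|\Psi z\|_2^2/m$ through the restricted minimum eigenvalue $C_{2s}(\Psi)=\inf_{|T|\le 2s}\gamma_{\min}(\Psi_T^t\Psi_T)/m$ and the restricted orthogonality quantity $O_s(\Psi)=\sup_{|A|,|B|\le s,\,A\cap B=\emptyset}\|\Psi_A^t\Psi_B/m\|$, obtaining $\|\Psi z\|_2^2/(m\|z\|_2^2)\ge (C_{2s}(\Psi)-8O_s(\Psi))/17$. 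The condition number enters because $O_s$ is controlled via $\gamma_{\max}(\Sigma)$ while $C_{2s}$ sits at $\gamma_{\min}(\Sigma)$; one then studies $f(z)=z^2-(8\kappa(\Sigma)+2)z+1$ and forces the concentration parameter $\tau$ below $1/(8\kappa(\Sigma)+2)$, which is exactly what produces the $(4\kappa(\Sigma)+1)^2$ in the sample-size threshold and the $68(4\kappa(\Sigma)+1)^2$ in the RE constant. Your route instead absorbs $\kappa(\Sigma)$ by inflating the block size to $k\asymp(4\kappa(\Sigma)+1)s$ and using the triangle inequality $\|\Psi z\|_2\ge\|\Psi z_{T_0}\|_2-\sum_{j\ge2}\|\Psi z_{T_j}\|_2$; the extra $1/\sqrt{k/s}$ in the tail then cancels the $\sqrt{\kappa(\Sigma)}$ coming from the upper isometry constant, and you only need $\tau$ to be a fixed numerical constant. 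This is a legitimate alternative and, if the constants are tracked, can even yield a sharper dependence on $\kappa(\Sigma)$ than the stated lemma requires. Two small remarks: your opening decomposition $z^t\Sigma z+z^t(\Psi^t\Psi/m-\Sigma)z$ is never actually used downstream and can be dropped; and the hypothesis $s\le e^{1-C_1/2}n$ in the paper is invoked not for the union-bound probability but to ensure $\sqrt{2s/m}\le\sqrt{4s\log(en/s)/(mC_1)}$ so that the two pieces of $\tau$ can be merged---in your scheme with block size $k$ you would use it the same way (with $s$ replaced by $k$), while the $1-1/n^2$ probability follows from $(en/k)^{2k}\ge n^2$.
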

\begin{proof}
$\forall z \in \mathcal{C}_{\mathcal{A^*}} = \{v \in \mathcal{R}^n: \|v_{\mathcal{I^*}}\|_1 \leq 3\|v_{\mathcal{A^*}}\|_1\}$, we sort its entries such that $$|z_{k_1}| \geq |z_{k_2}|\geq ...\geq |z_{k_n}|.$$ Let $\mathcal{A} = \{k_1,...,k_s\}$, and $\mathcal{I} = \bigcup_{t \geq 1} \mathcal{I}_{t} = \{k_{ st+1}, ..., k_{(t+1)s}\},$ where $s = \|x^*\|_0.$ Then
\begin{equation}\label{nnc}
\|z_{\mathcal{I}}\|_1 \leq 3\|z_{\mathcal{A}}\|_1.
\end{equation}
By  the elementary  inequality  $\|v\|_2\leq \frac{\|v\|_1}{\sqrt{s}} + \frac{\sqrt{s}\|v\|_{\infty}}{4}, \quad \forall v \in \mathcal{R}^s,$
we have
\begin{align}
\sum_{t\geq 1} \|z_{\mathcal{I}_{t}}\|_2 &\leq \sum_{t \geq 1} \frac{\|z_{\mathcal{I}_{t}}\|_1}{\sqrt{s}} + \frac{\sqrt{s}\|z_{\mathcal{I}_{t}}\|_{\infty}}{4}\nonumber\\
 & = \frac{\|z_{\mathcal{I}}\|_1}{\sqrt{s}} + \sum_{t\geq 1}\frac{\sqrt{s}\|z_{k_{st+1}}\|_{\infty}}{4}\nonumber\\
 &\leq \frac{\|z_{\mathcal{I}}\|_1}{\sqrt{s}} + \frac{\|z_{\mathcal{A}}\|_{1}+\|z_{\mathcal{I}}\|_{1}}{4\sqrt{s}}\nonumber\\
 & \leq \frac{3\|z_{\mathcal{A}}\|_1}{\sqrt{s}} + \frac{\|z_{\mathcal{A}}\|_{1}+3\|z_{\mathcal{A}}\|_{1}}{4\sqrt{s}} \quad (using \quad \eqref{nnc})\nonumber\\
 & =  \frac{4\|z_{\mathcal{A}}\|_1}{\sqrt{s}}\nonumber\\
 & \leq 4 \|z_{\mathcal{A}}\|_2,\label{shift}
\end{align}
which implies,
\begin{equation}\label{sw}
\|z\|_2^2 = \|z_{\mathcal{A}}\|_2^2 +   \sum_{t\geq 1} \|z_{\mathcal{I}_{t}}\|_2^2 \leq \|z_{\mathcal{A}}\|_2^2 +(\sum_{t\geq 1} \|z_{\mathcal{I}_{t}}\|_2)^2 = 17 \|z_{\mathcal{A}}\|_2^2.
\end{equation}
Define  $$C_{2s}(\Psi) = \inf_{A\subset [n], |A|  \leq 2s } \frac{\gamma_{min}(\Psi_{A}^t\Psi_{A})}{m},$$
and
 $$O_{s}(\Psi) = \sup_{A, B\subset [n], A \cap B = \emptyset , |A|\leq s, |B| \leq s}  \|\Psi_{A}^t\Psi_{B}/m\|.$$
We obtain that,
\begin{align}
\langle \Psi_{\mathcal{A}} z_{\mathcal{A}}, \Psi_{\mathcal{I}} z_{\mathcal{I}} \rangle/m & =
\langle \Psi_{\mathcal{A}} z_{\mathcal{A}}, \sum_{t\geq 1}\Psi_{\mathcal{I}_t} z_{\mathcal{I}_t} \rangle /m \nonumber\\
& \leq  \|z_{\mathcal{A}}\|_2 \sum_{t\geq 1} \|\Psi^t_{\mathcal{A}} \Psi_{\mathcal{I}_t}\|_2\|z_{\mathcal{I}_t}\|_2\nonumber\\
&\leq O_{s}(\Psi) \|z_{\mathcal{A}}\|_2 \sum_{t\geq 1} \|z_{\mathcal{I}_t}\|_2 \nonumber\\
&\leq 4 O_{s}(\Psi) \|z_{\mathcal{A}}\|^2,\label{oc}
\end{align}
where the first inequality uses Cauchy-Schwartz inequality, the second inequality follows from the definition of  $O_{s}(\Psi),$ and the third  is due to  \eqref{shift}.
Then, $\forall z \in \mathcal{C}_{\mathcal{A^*}}, z \neq 0,$
we have
\begin{align}
&\|\Psi z \|^2_2/(m \|z\|^2) \geq\|\Psi z \|^2_2/(17 m \|z_{\mathcal{A}}\|^2) \nonumber\\
&= (\|\Psi_{\mathcal{A}} z_{\mathcal{A}}\|^2 + \|\Psi_{\mathcal{I}} z_{\mathcal{I}}\|^2 + 2\langle\Psi_{\mathcal{A}} z_{\mathcal{A}}, \Psi_{\mathcal{I}} z_{\mathcal{I}} \rangle)/(17 m \|z_{\mathcal{A}}\|^2)\nonumber\\
& \geq (\|\Psi_{\mathcal{A}} z_{\mathcal{A}}\|^2  -  8O_{s}(\Psi)  \|z_{\mathcal{A}}\|^2)/(17 m \|z_{\mathcal{A}}\|^2)\nonumber\\
&\geq ( C_{2s}(\Psi)- 8O_{s}(\Psi))/17,\label{lowcs}
\end{align}
where the first inequality uses \eqref{sw}, the second inequality follows from \eqref{oc}, and the last holds due to the definition of
$C_{2s}(\Psi).$
 It follows from \eqref{lowcs} that,  to   complete the proof of this lemma it suffices to derive a lower  bound on $ C_{2s}(\Psi)$ and an upper bound on  $O_{s}(\Psi)$  with high probability, respectively.
Given $A\subset [n], |A|  \leq 2s  $, we define the event $E_{A} = \{\sqrt{\frac{\gamma_{min}(\Psi_{A}^t\Psi_{A})}{m}} > \sqrt{\gamma_{min}(\Sigma)}(1-C_{2} \sqrt{\frac{2s}{m}}-\frac{t}{\sqrt{m}})\}$. Then,
\begin{align}
\mathbb{P}[C_{2s}(\Psi) > \gamma_{min}(\Sigma)(1-C_{2} &\sqrt{\frac{2s}{m}}-\frac{t}{\sqrt{m}})^2] = \mathbb{P}[\bigcap_{A\in [n], |A|\leq 2s} E_{A}]\nonumber = \mathbb{P}[\bigcap_{A\in [n], |A|= \ell, 1\leq\ell\leq 2s} E_{A}]\nonumber\\
& = 1 - \mathbb{P}[\bigcup_{A\in [n], |A|= \ell, 1\leq\ell\leq 2s} \overline{E_{A}}]\nonumber\\
&\geq 1- \sum_{\ell = 1}^{2s}\sum_{A\subset[n], |A| =  \ell} (1-\mathbb{P}[E_{A}])\nonumber\\
&\geq 1- \sum_{\ell = 1}^{2s}\sum_{A\subset [n], |A|\leq \ell}  2\exp(-C_1 t^2)\nonumber\\
&= 1- \sum_{\ell = 1}^{2s}\binom{n}{\ell}2 \exp(-C_1 t^2 )\nonumber\\
&\geq 1- 2(\frac{en}{2s})^{2s} \exp(-C_1 t^2 ),\nonumber
\end{align}
where the first inequality follows from the union bound, the second inequality  follows from \eqref{norm1} by replacing $\Psi$ with $\Psi_{A}$, and  the third inequality  holds since $\sum_{\ell = 1}^{2s}\binom{n}{\ell}\leq (\frac{n}{2s})^{2s}\sum_{\ell =0}^{2s}\binom{n}{\ell}(\frac{2s}{n})^{\ell}\le (\frac{n}{2s})^{2s}(1+\frac{2s}{n})^{n} \leq (\frac{e n}{2s})^{2s}$.
Then, we derive with probability at least $1- 2(\frac{en}{2s})^{2s} \exp(-C_1 t^2 ),$
\begin{equation}\label{lbc}
C_{2s}(\Psi) > \gamma_{min}(\Sigma)(1-C_{2} \sqrt{\frac{2s}{m}}-\frac{t}{\sqrt{m}})^2.
\end{equation}
Given $A\subset [n],  B \subset [n], |A|  \leq s, |B|\leq s, A\cap B = \emptyset $, we define the event
$E_{A, B} = \{\|\Psi_{A}^t\Psi_{B}/m\| > \gamma_{max}(\Sigma)(C_{2} \sqrt{\frac{2s}{m}}+\frac{t}{\sqrt{m}})\}$.
Denote $C = A \cup B,$  $\Phi_C = \Psi_C \Sigma_{CC}^{-\frac{1}{2}}$, $G_{C} = \Phi_{C}^t\Phi_{C}/m - \textbf{I}_{2s}$.  Then each row of $\Phi_C $ is multivariate normal random vector that is sampled from $\mathcal{N}(\textbf{0}, \textbf{I}_{2s})$. It follows from  \eqref{norm2} with $\Psi$ and $\Sigma$ replaced by $\Phi_C$ and $\textbf{I}_{2s}$, respectively,  that
$$\mathbb{P}[ \|G_{C}\|\geq C_2 \sqrt{\frac{2s}{m}}+\frac{t}{\sqrt{m}} ] \leq 2\exp{(-C_1 t^2)}.$$
Observing $\Sigma_{CC}^{-\frac{1}{2}}\Psi_{A}^t\Psi_{B} \Sigma_{CC}^{-\frac{1}{2}}/m$ is a sub-matrix of $G_{C}$,
we deduce,
$$\mathbb{P}[ E_{A,B}] \leq 2\exp{(-C_1 t^2)}.$$
Then, similarly to the proof of \eqref{lbc}, we have
\begin{align}
\mathbb{P}[O_{s}(\Psi) > \gamma_{max}(\Sigma)(C_{2} \sqrt{\frac{2s}{m}}+\frac{t}{\sqrt{m}})] &= \mathbb{P}[\bigcup_{A, B\subset [n], A \cap B = \emptyset, |A|\leq s, |B| \leq s} E_{A,B}]\nonumber\\
& = \mathbb{P}[\bigcup_{A\subset [n], |A| = \ell, B\subset [n], |B| = \tilde{\ell}, A \cap B = \emptyset,  1\leq\ell\leq s, 1\leq\tilde{\ell}\leq s } E_{A,B}]\nonumber\\
& \leq \sum_{\ell = 1,\tilde{\ell} = 1}^{s}\sum_{A\in [n], |A|\leq \ell, B\in [n], |B|\leq \tilde{\ell},  A \cap B = \emptyset}  2 \exp(-C_1 t^2)\nonumber\\
&\leq (\sum_{\ell = 1}^{s}\binom{n}{\ell})^2 2\exp(-C_1 t^2 )\nonumber\\
&\leq  2(\frac{en}{s})^{2s} \exp(-C_1 t^2 ),\nonumber
\end{align}
which implies  with probability at least $1-2(\frac{en}{s})^{2s} \exp(-C_1 t^2 )$,
\begin{equation}\label{ubo}
O_{s}(\Psi) \leq  \gamma_{max}(\Sigma)(C_{2} \sqrt{\frac{2s}{m}}+\frac{t}{\sqrt{m}}).
\end{equation}
Combining \eqref{lbc} and \eqref{ubo} and setting $t = \sqrt{\frac{4s}{C_1} \log\frac{en}{s}}$, we obtain that  with probability at least
$1- 4/(\frac{en}{s})^{2s} \geq 1-  4/n^2$
$$ C_{2s}(\Psi) - 8O_{s}(\Psi) \geq \gamma_{min}(\Sigma)f\big(\sqrt{\frac{2s}{m}}+\sqrt{\frac{4s}{mC_1} \log\frac{en}{s}}\big),$$
where the unitary function $f(z) = z^2-(8\kappa(\Sigma)+2)z+1$.
It follows from  the assumption $s\leq \exp^{(1-\frac{C_1}{2})}n$ that $\sqrt{\frac{2s}{m}}\leq \sqrt{\frac{4s}{mC_1} \log\frac{en}{s}}$. Then some basic algebra shows that $f\big(\sqrt{\frac{2s}{m}}+\sqrt{\frac{4s}{mC_1} \log\frac{en}{s}}\big)\geq f(\frac{1}{8\kappa(\Sigma)+2}) = \frac{1}{4(4\kappa(\Sigma)+1)^2}$ as long as $m \geq \frac{64(4\kappa(\Sigma)+1)^2}{C_1}s\log\frac{en}{s}$.
The proof of Lemma \ref{rel} is completed.
\end{proof}

Now we are in  the place of combining  the above pieces together  to finish the  proof of Theorem $\ref{errsub}$. Recall $R = x_{\ell_{1}} - \widetilde{x}^*$.
It follows from Lemma \ref{slev} that   $R\in  \mathcal{C}_{\mathcal{A}^*}$ and \eqref{basiceq} holds by conditioning on $\mathcal{E}$,
i.e.,
 $$\frac{1}{m}\|\Psi R\|_2^2 + \lambda \|R_{\mathcal{I}^*}\|_1 \leq 3\lambda \|R_{\mathcal{A}^*}\|_1,$$
 which together with Lemma \ref{rel} implies that,   with probability at least $1-4/n^2$,
$$\frac{\gamma_{min}(\Sigma)}{68(4\kappa(\Sigma)+1)^2} \|R\|_2^2 \leq 3\lambda \|R_{\mathcal{A}^*}\|_1\leq  3\lambda \sqrt{s}\|R_{\mathcal{A}^*}\|_2,$$
i.e., $$ |c|\|x_{\ell_{1}}/c - {x}^*\|_2=\|x_{\ell_{1}} - \widetilde{x}^*\|_2 \leq \frac{204(4\kappa(\Sigma)+1)^2}{\gamma_{min}(\Sigma)}\lambda \sqrt{s}.$$
The proof of Theorem \ref{errsub} is completed by dividing $|c|$ on both side and using Lemma \ref{noiselinf}, which guaranties that $\eqref{basiceq}$ holds with    $\lambda =  \frac{4(1+|c|C_3)}{\sqrt{C_1}}\sqrt{\frac{\log n}{m}}$ with probability greater than  $1-2/n^3  -2/n^2$.
\end{proof}

%

\section{Proof of the equivalency between the PDAS and   \eqref{newton1} - \eqref{newton2}}
\label{app:shownewton}
\begin{proof}
  Partition $Z^k$,   $D^k$ and  $F(Z^k)$ according to  $\mathcal{A}_{k}$ and $\mathcal{I}_{k}$ such that
  \begin{equation*}
       Z^{k}=\left(
         \begin{array}{c}
         x_{\mathcal{A}_{k}} \\
           x_{\mathcal{I}_{k}} \\
           d_{\mathcal{A}_{k}} \\
          d_{\mathcal{I}_{k}} \\
         \end{array}
          \right),
          \end{equation*}
       \begin{equation}\label{FF2}
       D^{k}=\left(
         \begin{array}{c}
         D^{x}_{\mathcal{A}_{k}} \\
          D^{x}_{\mathcal{I}_{k}} \\
           D^{d}_{\mathcal{A}_{k}} \\
          D^{d}_{\mathcal{I}_{k}} \\
         \end{array}
          \right),
          \end{equation}
       \begin{equation}\label{FF3}
F(Z^{k})=
\left[\begin{array}{c}
 - d_{\mathcal{A}_{k}}^{k} + \lambda sign(x_{\mathcal{A}}^k+ d_{\mathcal{A}}^k)  \\
 x_{\mathcal{I}_{k}}^{k}  \\
  \Psi^{t}_{\mathcal{A}_{k}} \Psi_{\mathcal{A}_{k}} x_{\mathcal{A}_{k}}^{k} +  \Psi^t_{\mathcal{A}_{k}} \Psi_{\mathcal{I}_{k}} x_{\mathcal{I}_{k}}^{k}+ m d_{\mathcal{A}_{k}}^{k} - \Psi^t_{\mathcal{A}_{k}}y     \\
\Psi^{t}_{\mathcal{I}_{k}} \Psi_{\mathcal{A}_{k}} x_{\mathcal{A}_{k}}^{k} +  \Psi^{t}_{\mathcal{I}_{k}} \Psi_{\mathcal{I}_{k}} x_{\mathcal{I}_{k}}^{k}+ m d_{\mathcal{I}_{k}}^{k} - \Psi^t_{\mathcal{I}_{k}}y
\end{array}\right].
\end{equation}
 Substituting \eqref{FF2} - \eqref{FF3} and \eqref{Jacobi} into \eqref{newton1}, we have
\begin{align}
  -(d_{\mathcal{A}_{k}}^{k} + D^{d}_{\mathcal{A}_{k}})    &= -\lambda sign(x_{\mathcal{A}}^k+ d_{\mathcal{A}}^k),\label{eqv1}\\
   x^{k}_{\mathcal{I}_{k}} + D^{x}_{\mathcal{I}_{k}}  &= \textbf{0},\label{eqv2}\\
\Psi^{t}_{\mathcal{A}_{k}} \Psi_{\mathcal{A}_{k}} (x_{\mathcal{A}_{k}}^{k} + D^{x}_{\mathcal{A}_{k}})&= \Psi^t_{\mathcal{A}_{k}}y  - m( d_{\mathcal{A}_{k}}^{k} + D^{d}_{\mathcal{A}_{k}})- \Psi^t_{\mathcal{A}_{k}} \Psi_{\mathcal{I}_{k}} ( x^{k}_{\mathcal{I}_{k}} + D^{x}_{\mathcal{I}_{k}}),  \label{eqv3}\\
m(d_{\mathcal{I}_{k}}^{k} + D^{d}_{\mathcal{I}_{k}}) &= \Psi^t_{\mathcal{I}_{k}}y - \Psi^{t}_{\mathcal{I}_{k}} \Psi_{\mathcal{A}_{k}}(x_{\mathcal{A}_{k}}^{k} + D^{x}_{\mathcal{A}_{k}}) - \Psi^t_{\mathcal{A}_{k}} \Psi_{\mathcal{I}_{k}}(x_{\mathcal{I}_{k}}^{k} + D^{x}_{\mathcal{I}_{k}}) . \label{eqv4}
\end{align}
It follows from \eqref{newton2} that
 \begin{equation}\label{relation}
 \left( \begin{array}{c}
  x_{\mathcal{A}_{k}}^{k+1} \\
          x_{\mathcal{I}_{k}}^{k+1} \\
           d_{\mathcal{A}_{k}}^{k+1} \\
          d_{\mathcal{I}_{k}}^{k+1} \\
         \end{array}
       \right)
       = \left(
         \begin{array}{c}
           x_{\mathcal{I}_{k}}^{k}+D^{x}_{\mathcal{I}_{k}} \\
           d_{\mathcal{A}_{k}}^{k} +D^{d}_{\mathcal{A}_{k}}\\
           x_{\mathcal{A}_{k}}^{k}+D^{x}_{\mathcal{A}_{k}} \\
          d_{\mathcal{I}_{k}}^{k}+ D^{d}_{\mathcal{I}_{k}} \\
         \end{array}
       \right).
 \end{equation}
Substituting  \eqref{relation} into \eqref{eqv1} - \eqref{eqv4}, we get the iteration  procedure of PDAS in Algorithm \ref{alg:genew}.
This completes the  proof.
\end{proof}

\bibliographystyle{siam}
\bibliography{onebitref}
\end{document}